 \let\MYoriglatexcaption\caption
 \renewcommand{\caption}[2][\relax]{\MYoriglatexcaption[#2]{#2}}
\newtheorem {lem}{Lemma}
\newtheorem {thm}{Theorem}
\newtheorem {rem}{Remark}
\newtheorem {defi}{Definition}
\newtheorem {cor}{Corollary}
\title{$PI^hD^{n-1}$ synchronization of higher-order nonlinear systems with a recursive Lyapunov approach}
\author{ Davide~Liuzza, Dimos~V.~Dimarogonas and Karl~H.~Johansson
\thanks{D. Liuzza, D. V. Dimarogonas and K. H. Johansson are with ACCESS Linnaeus Centre and School of Electrical Engineering, Royal Institute of Technology, Stockholm, Sweden. Emails: {\tt\small \{liuzza, dimos, kallej\}@kth.se}.
}
}
\begin{document}

\newcommand{\R}{\mathbb{R}}

\maketitle

\begin{abstract}
This paper investigates the problem of synchronization for nonlinear systems. Following a Lyapunov approach, we firstly study global synchronization of nonlinear systems in canonical control form with both distributed proportional-derivative and proportional-integral-derivative control actions of any order. 
To do so, we develop a constructive methodology and generate in an iterative way inequality constraints on the coupling matrices which guarantee the solvability of the problem or, in a dual form, provide the nonlinear weights on the coupling links between the agents such that the network synchronizes. The same methodology allows to include a possible distributed integral action of any order to enhance the rejection of heterogeneous disturbances.The considered approach does not require any dynamic cancellation, thus preserving the original nonlinear dynamics of the agents. 
The results are then  extended to linear and nonlinear systems admitting a canonical control transformation.
Numerical simulations validate the theoretical results.
\end{abstract}

\begin{IEEEkeywords}
Higher-order synchronization, networked nonlinear systems, distributed PID control, networked control of companion  forms.
\end{IEEEkeywords}

%
\IEEEpeerreviewmaketitle

\section{Introduction}

%
%
%
%

\IEEEPARstart{S}{ynchonization} 
of networked systems has been widely studied in the last decade by different research  communities  \cite{ne:03,bola:06,bape:02}. Strategies allowing to reach an agreement among dynamical agents with only local interactions
have found successful applications in mobile robots and unmanned aerial vehicles formation control, distributed sensors communication, platooning and formation control \cite{ar:07,caan:08,ol:07}. Other relevant applications are related to the synchronization of biochemical oscillators and to the control of distributed large-scale systems, with a particular attention to the class of electrical power networks and smart grids \cite{hast:12,hich:06,dobu:12}.

Research on distributed control and synchronization is often focused on finding conditions and control laws able to steer the system to a common synchronous trajectory.

Specifically, starting from the consensus problem for single integrator nodes, the problem of synchronization has been gradually and extensively extended to  linear systems, first with assumptions on the eigenvalues of the dynamical matrix or input matrix \cite{scse:09,sesh:09} and later under the mild assumption on the controllability and detectability alone of the linear systems \cite{lidu:10,zhle:11}. So,  for the class of linear systems, general results are currently available.
Also research on synchronization of nonlinear systems has generated many results. However, due to the intrinsic difficulty, synchronization of nonlinear systems is still under active investigation. 

Nowadays, various methodologies aim at studying synchronization for wide classes of nonlinear systems. Approaches include Lyapunov methods \cite{lihizh:12,lich:06}, contraction analysis \cite{wasl:05,dili:14} and passivity and incremental dissipativity \cite{ar:07,scar:10,lihi:11}

Other authors focus on synchronization of agents whose model appears in canonical control form, also called companion form \cite{slli:91}. This class of results is known as higher-order synchronization and explicitly exploits the structure of the dynamical model. 

{Specifically, Lyapunov methods are considered, among others, in \cite{lihizh:12,lich:06,liji:08,lili:11,yuchca:11,wehu:13,lich:15,dedi:15}. These papers offer a huge spectrum of approaches for the synchronization problem. Without going to much into details, these works explore the possibility to leverage on: bounded Jacobian assumption, linear systems with additional Lipschitz nonlinearity and the existence of the solution of suitable LMIs, hypothesis on inequalities constraints for the nonlinear dynamics, external reference pinner nodes.}

{Specifically, consensus among second-order integrators and higher-order integrators has been addressed \cite{reat:05,yuch_consensus:10,yuzh:13,remo:06,hoga:07,yuchcaku:10,soca:10,re:08,yuch:11,lixi:13}, following different approaches, such as studying the determinant of the overall networked linear system or via ensuring that the polynomial obtained considering the eigenvalue problem on the companion dynamical systems' matrix and the coupling feedback are Hurwitz.}
One of the motivations behind these studies is related to the fact that several dynamical systems, e.g. mechanical systems, are naturally described in canonical control form and, in particular, higher-order integrators are a more realistic model of mobile robotic vehicles than the simple integrators. 

{The papers reviewed above strongly rely on tools for linear systems or on the specific structure of companion form of higher-order integrators and  their extension to nonlinear systems appears to be a non-trivial task.}

{Lyapunov methods for second-order integrators are considered in \cite{hoga:07} and \cite{yuchcaku:10}, in which a Lyapunov function specific for the second-order case is adopted. A specific second-order integrator Lyapunov approach is also considered in \cite{soca:10}, where the presence of an external pinner is also required, while in \cite{re:08} the specific second-order consensus is considered when bounded control actions are required.
The case of higher-order systems with nonlinear dynamics is instead studied in \cite{lixi:13}. In that paper, the specific cases of first-order and second-order nonlinear systems are considered and, for these two cases, two suitable Lyapunov functions are introduced to prove convergence. The extension to higher-order nonlinear dynamics is not addressed in this work.
In general, although these papers allow to consider nonlinear dynamics via a Lyapunov function, the results appear to be specific to the order and the problem considered and, therefore, not straightforward to scale to any arbitrary system's order.}

{In \cite{dale:11}, synchronization of second-order nonlinear dynamics is addressed via a nonlinear compensation through a neural network and the presence of an external reference. This approach is further extended in \cite{zhle:10,zhle:12,bile:14} for higher-order nonlinear systems. Although such results provide a suitable methodology for addressing the higher-order nonlinear synchronization, the methodology is not applicable to the free synchronization problem where the aim is to preserve the original nonlinear dynamics of the agents while studying an emerging common behaviour without permanently forcing the overall system.}

{Motivated by the need for providing a general framework for the free synchronization problem, in this paper we study the higher-order free synchronization for nonlinear systems of any degree considering local state feedback. 
Referring to the previous literature on this problem, we compare our results with the strategies in \cite{reat:05,yuch_consensus:10,yuzh:13,remo:06,hoga:07,yuchcaku:10,soca:10,re:08,yuch:11,lixi:13}. In our case, nonlinear dynamics are allowed and therefore a Lyapunov approach is developed. However, differently from what done in \cite{reat:05,yuch_consensus:10,yuzh:13,remo:06,hoga:07,yuchcaku:10,soca:10,re:08,yuch:11,lixi:13}, we do not focus our investigation on a specific system's order but instead derive results for general degree higher-order systems. Also, compared to \cite{zhle:10,zhle:12,bile:14}, no dynamic cancellation (i.e. reduction to a higher-order consensus) is needed, thus preserving the free system motion.}

{More specifically, we address the problem via finding a Lyapunov function whose structure is based on the system's order considered.} Therefore, called $n$ the order of the nonlinear agents, a Lyapunov function is derived via a suitable algorithm that generates, up to iteration $n$, a set of appropriate matrices. These matrices, blocked together in a specific way depending on the order $n$, will constitute the core of the Lyapunov function expression, which in turn will prove free synchronization. A key novelty of the approach followed in this paper, with respect to the literature, is that the conducted  analysis is constructive, providing in an iterative way inequality constraints on the coupling matrices which guarantee the solvability of the problem or, in a dual form, providing the nonlinear weights on the coupling links between the agents such that the network synchronizes. 
The given procedure relies on the iterative computation of the solution of a system of three second-order inequalities  that for this reason are, contrary to other approaches in the literature {(see for example \cite{yuch:11} for the case of networked integrators)}, computable in an easier way.

Also, we believe that the  analysis/synthesis method via a constructive Lyapunov function represents a relevant theoretical achievement due to its generality and scalability.  
Furthermore, the approach naturally encompasses the possibility to have distributed integral control actions of any order, i.e.,  distributed $PI^hD^{n-1}$ controllers, with $h\geq 0$ being the degree of the integral action, without any additional hypothesis. Such integral action can be used to attenuate possible distributed and heterogeneous disturbances acting on the interconnected plants. As shown in \cite{frya:06}, an integral action significantly enhances the  performances of the closed loop system. 

{We note here that, the resulting distributed $PI^hD^{n-1}$ controllers have an analogous structure to the ones in \cite{chwa:16,wach:15}. These latter papers address the flocking problem of a team of mobile robots following a polynomial reference trajectory. Such mobile agents are modelled with single \cite{wach:15} and  higher-order \cite{chwa:16} integrators and $PI^n$ and $PI^{l_m-m}D^{m-1}$ containment controllers are, respectively, designed. To prove convergence, the adopted methodology exploits a pole-placement technique for the individual linear system and then solves a Lyapunov equation on the overall linear systems.
In \cite{chwa:16}, a discrete time version of the proposed strategies is also developed.
Despite the analogy of the controllers' structure, however, these works differ from the results presented here in the control goal, the agents' model and the analytical techniques adopted. 
}

As a further contribution of our paper, the approach studied for higher-order nonlinear systems is extended to the relevant class of interconnected nonlinear systems admitting a canonical control transformation, resulting in a distributed nonlinear control action which guarantees the synchronization of the network.
Classes of problem studied in the literature, such as second-order and higher-order consensus can be seen as special cases of such general framework.
The particular case of linear systems is also addressed as a corollary of such general framework, thus resulting in the sufficient condition of controllability of the linear systems, as already showed in a different way in \cite{lidu:10}. However, it is worth noticing that also for the case of linear systems, the approach presented in the paper naturally allows to explicitly consider integral control actions of any order for possible disturbances rejections.

The paper is organized in the following way. A mathematical background and the problem statement can be found in Section \ref{sec:mathematical_background} and Section \ref{sec:problem_formulation}, respectively. In Section \ref{sec:ditributed_gain_selection}  the aforementioned iterative algorithms are presented. The synchronization of systems in companion form is proved in Section \ref{sec:synch_canonical_form} both for $PD^{n-1}$ and $PI^hD^{n-1}$ local control laws, while an extension to controllable systems is addressed in Section \ref{sec:synch_canonical_transformation}. Numerical examples are illustrated in Section \ref{sec:numerical_examples}, while concluding remarks and future work are given in Section \ref{sec:conlusion_future_work}.


\section{Mathematical background}\label{sec:mathematical_background}

\subsection{Matrix Analysis}
Here we report some concepts of matrix analysis that will be useful in the rest of the paper  \cite{hojo:87}. 

{Let us consider} a generic square matrix  $A\in \Bbb{R}^{n\times n}$. For any index $k \in \{1,\dots,n\}$, the $k\times k$ top left submatrix obtained from $A$, so considering the entries that lie in the first $k$ rows and columns of $A$, is called a {\em leading principal submatrix} and its determinant is called  {\em leading  principal minor}. In analogous way, the $k\times k$ bottom right submatrix is called {\em trailing principal submatrix} and its determinant {\em trailing principal minor}.

Two matrices $A, B\in \Bbb{R}^{n\times n}$ are said to be {\em commutative} if $AB=BA$. Furthermore, they are said to be {\em simultaneously diagonalizable} if there exists a nonsingular matrix $S\in \Bbb{R}^{n\times n}$ such that $S^{-1}AS$ and $S^{-1}BS$ are both diagonal. The following result hold.
\begin{lem}
Let $A,B\in \Bbb{R}^{n\times n}$ be simultaneously diagonalizable. Then they are commutative.
\end{lem}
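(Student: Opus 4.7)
The plan is to unwind the definition of simultaneous diagonalizability and exploit the trivial fact that diagonal matrices commute with one another. This is the only substantive input needed; the rest is a one-line algebraic manipulation.

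First I would invoke the hypothesis to obtain a nonsingular $S \in \mathbb{R}^{n\times n}$ such that $D_A := S^{-1}AS$ and $D_B := S^{-1}BS$ are both diagonal. Next I would observe that any two diagonal matrices commute entrywise, so $D_A D_B = D_B D_A$. Substituting the expressions for $D_A, D_B$ into this equality gives
\[
S^{-1}A S \, S^{-1} B S \;=\; S^{-1} B S \, S^{-1} A S,
\]
which simplifies to $S^{-1}(AB)S = S^{-1}(BA)S$. Left-multiplying by $S$ and right-multiplying by $S^{-1}$ then yields $AB = BA$, which is exactly the claim.

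There is no real obstacle here; the only thing to be careful about is handling the similarity transformation cleanly, i.e. making sure the $SS^{-1}$ in the middle collapses correctly so that the identity on diagonal matrices transfers to $A$ and $B$ themselves. The converse direction (commutativity implying simultaneous diagonalizability) is false in general without extra assumptions such as diagonalizability of each matrix, but it is not required for this lemma.
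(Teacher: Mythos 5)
Your proof is correct and is the standard argument: the paper states this lemma without proof (it is a textbook fact from matrix analysis, cf.\ Horn and Johnson), and your derivation---conjugating by $S$, using that diagonal matrices commute, and collapsing the $SS^{-1}$ factors---is exactly the argument one would supply. Your closing remark that the converse fails without additional hypotheses is also accurate and does not affect the lemma.
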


Let $A\in \Bbb{R}^{n\times n}$ be any symmetric matrix, i.e. $A=A^T$. Then the eigenvalues of $A$ are real and the eigenvectors constitutes an orthonormal basis for $A$. We denote with $\mathrm{eig}(A)$ the set containing the eigenvalues of A and with $\lambda_{\min}(A)=\min_{\lambda_i\in \mathrm{eig}(A)}\lambda_i$ and with $\lambda_{\max}(A)=\max_{\lambda_i\in \mathrm{eig}(A)}\lambda_i$ the minimum and maximum eigenvalue of $A$, respectively. For a symmetric matrix the following results hold.

\begin{lem}\textrm{(Rayleigh)}
\newline
Let $A\in \Bbb{R}^{n\times n}$ be a symmetric matrix. Then, for all $y\in\R^n$  it holds
$
\lambda_{\min}y^Ty\leq y^T A y\leq \lambda_{\max} y^Ty.
$
\end{lem}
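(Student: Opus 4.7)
The plan is to invoke the spectral theorem for real symmetric matrices, which the excerpt has essentially already recalled in the sentence preceding the lemma: the eigenvalues of $A$ are real and its eigenvectors form an orthonormal basis. This gives a decomposition $A = Q \Lambda Q^T$, where $Q \in \mathbb{R}^{n \times n}$ is orthogonal (so $Q^T Q = Q Q^T = I$) and $\Lambda = \mathrm{diag}(\lambda_1, \dots, \lambda_n)$ collects the eigenvalues of $A$.

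Next I would perform the change of variables $z := Q^T y$. Because $Q$ is orthogonal, $z^T z = y^T Q Q^T y = y^T y$, so the squared norm is preserved. Substituting into the quadratic form gives
$$
y^T A y \;=\; y^T Q \Lambda Q^T y \;=\; z^T \Lambda z \;=\; \sum_{i=1}^{n} \lambda_i z_i^2 .
$$

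Now I would bound this sum term by term. Since $\lambda_{\min} \leq \lambda_i \leq \lambda_{\max}$ for every $i$ and $z_i^2 \geq 0$, summing yields
$$
\lambda_{\min} \sum_{i=1}^{n} z_i^2 \;\leq\; \sum_{i=1}^{n} \lambda_i z_i^2 \;\leq\; \lambda_{\max} \sum_{i=1}^{n} z_i^2 .
$$
Replacing $\sum_i z_i^2$ with $y^T y$ via the isometry established above produces the claimed double inequality for every $y \in \mathbb{R}^n$, completing the proof.

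I do not foresee any genuine obstacle; the only item requiring care is the justification of the orthogonal diagonalization, but this is exactly the spectral-theorem statement recalled by the authors just before the lemma, so it can be cited rather than reproved. If a self-contained argument were preferred, one could avoid the spectral theorem by invoking the variational (Courant--Fischer) characterization $\lambda_{\max} = \max_{y \neq 0} y^T A y / y^T y$ and $\lambda_{\min} = \min_{y \neq 0} y^T A y / y^T y$, from which the inequalities follow immediately; but the diagonalization route is the most direct and matches the context of the paper.
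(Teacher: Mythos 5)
Your proof is correct and complete: the orthogonal diagonalization $A = Q\Lambda Q^T$, the norm-preserving substitution $z = Q^T y$, and the term-by-term bound $\lambda_{\min} z_i^2 \leq \lambda_i z_i^2 \leq \lambda_{\max} z_i^2$ together give exactly the claimed inequality. Note that the paper itself states this lemma without proof, as standard background material cited from Horn and Johnson's \emph{Matrix Analysis}, so there is no in-paper argument to compare against; your spectral-theorem route is the canonical textbook proof, and it correctly leverages the orthonormal-eigenbasis fact the authors recall in the sentence immediately preceding the lemma.
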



\begin{lem}\textrm{ (Sylvester's criterion)}
\newline
Let $A\in \Bbb{R}^{n\times n}$ be a symmetric matrix. Then, $A$ is {positively defined} iff every leading (respectively, trailing) principal minor of A is positive (including the determinant of $A$).
\end{lem}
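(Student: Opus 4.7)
The plan is to prove both implications separately, with the forward direction (necessity) following immediately from the definition of positive definiteness and the reverse direction (sufficiency) by induction on $n$ using a Schur complement argument. For necessity, suppose $A$ is positive definite and let $A_k$ denote the $k\times k$ leading principal submatrix. Any nonzero $y\in \R^k$ can be padded with zeros into $\tilde y=(y^T,\,0,\dots,0)^T\in\R^n$, and then $\tilde y^T A \tilde y = y^T A_k y > 0$, so $A_k$ is itself symmetric positive definite. Its eigenvalues are therefore all positive (by the Rayleigh characterization invoked earlier in the paper), and hence $\det(A_k)>0$.

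For sufficiency I would induct on $n$. The base case $n=1$ reduces to the observation that $A=[a_{11}]$ is positive definite iff $a_{11}>0$. For the inductive step, partition
\begin{equation*}
A=\begin{pmatrix} A_{n-1} & b \\ b^T & a_{nn} \end{pmatrix},
\end{equation*}
and note that the leading principal minors of $A_{n-1}$ coincide with the first $n-1$ leading principal minors of $A$, all of which are positive. By the inductive hypothesis, $A_{n-1}$ is positive definite, hence invertible. The standard block $LDL^T$ factorization gives
\begin{equation*}
A=\begin{pmatrix} I & 0 \\ b^T A_{n-1}^{-1} & 1 \end{pmatrix}\begin{pmatrix} A_{n-1} & 0 \\ 0 & s \end{pmatrix}\begin{pmatrix} I & A_{n-1}^{-1} b \\ 0 & 1 \end{pmatrix},
\end{equation*}
where $s=a_{nn}-b^T A_{n-1}^{-1} b$ is the Schur complement. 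Taking determinants yields $\det(A)=\det(A_{n-1})\cdot s$, and since both $\det(A)$ and $\det(A_{n-1})$ are positive by assumption, we conclude $s>0$. The block-diagonal middle factor is therefore positive definite, and because positive definiteness is preserved under congruence transformations (the outer two factors are nonsingular and mutual transposes), $A$ is positive definite.

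For the trailing version, I would introduce the reversal permutation matrix $J\in\R^{n\times n}$ with $J_{ij}=1$ iff $i+j=n+1$, which satisfies $J=J^T=J^{-1}$. Then $\tilde A := JAJ$ is congruent to $A$, so $\tilde A$ is positive definite iff $A$ is. Moreover, a direct index computation shows that the leading $k\times k$ submatrix of $\tilde A$ is the trailing $k\times k$ submatrix of $A$ with its rows and columns simultaneously reversed, so its determinant equals the $k$-th trailing principal minor of $A$ (the two row/column reversals contribute equal sign factors that cancel). Applying the leading-minor version of the criterion to $\tilde A$ therefore yields the trailing-minor version for $A$.

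The main obstacle is the inductive step: one must correctly set up the Schur-complement congruence so that positivity of the new diagonal entry $s$ is forced by the single remaining hypothesis $\det(A)>0$, once the inductive hypothesis has absorbed the other $n-1$ leading minors. The remaining bookkeeping for the trailing case is essentially a careful sign/index check under the reversal permutation.
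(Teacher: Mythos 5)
Your proof is correct, but there is nothing in the paper to compare it against: Sylvester's criterion appears in the background section (Section \ref{sec:mathematical_background}) as a standard result quoted from \cite{hojo:87}, and the paper supplies no proof of its own. Your argument is the classical textbook one: necessity by padding vectors with zeros and using the Rayleigh characterization, sufficiency by induction via the block $LDL^T$ factorization, where the inductive hypothesis absorbs the first $n-1$ leading minors and $\det(A)=\det(A_{n-1})\cdot s$ forces positivity of the Schur complement $s$; the congruence step is sound since the outer unit-triangular factors are indeed mutual transposes (using symmetry of $A_{n-1}^{-1}$). Your treatment of the trailing case is also correct and is the one subtle point worth having checked: conjugation by the reversal permutation $J=J^T=J^{-1}$ is a congruence, and the two reversal sign factors from rows and columns cancel, so the leading minors of $JAJ$ equal the trailing minors of $A$. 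No gaps.
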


\subsection{Lie algebra and weak-Lipschitz functions}

Here we give some useful definitions and basic concepts on differential geometry (for more details see also \cite{kh:02,slli:91}) and the definition of weak-Lipschitz functions that will be useful in the rest of the paper.

\begin{defi}
A function $T(x):\R^n\mapsto \R^n$ defined in a region $\Omega\subseteq \R^n$ is said to be a {\em diffeomorphism} if it is smooth and invertible, with inverse function $T^{-1}(x)$ smooth. 
\end{defi}

Given a smooth scalar function $h(x):\R^n\mapsto \R$, its gradient will be denoted by the row vector $\frac{\partial}{\partial x}h(x)=\left[\frac{\partial}{\partial x_1}h(x),\dots,\frac{\partial}{\partial x_n}h(x)\right]$. In the case of vector function $f(x):\R^n\mapsto\R^n$, with the same notation $\frac{\partial}{\partial x}f(x)$ we denote the Jacobian matrix of $f(x)$.
The following definitions can be now given. 

\begin{defi}
Let us consider a smooth scalar function $h(x):\R^n\mapsto \R$ and a smooth vector field $f(x):\R^n\mapsto\R^n$, the {\em Lie derivative of $h$ with respect to $f$} is the scalar function defined as $\mathcal{L}_{f}h(x):=\frac{\partial}{\partial x}h(x)f(x)$. 
\end{defi}
Multiple Lie derivative can be easily written by recursively extending the notation  as $\mathcal{L}^k_{f}h(x)=\mathcal{L}_{f}\left(\mathcal{L}^{k-1}_f h\right)$, for $k=1,2,\dots$, and with $\mathcal{L}^0_{f}h(x)=h$.  

\begin{defi}
Let us consider two smooth vector fields $f(x),g(x):\R^n\mapsto\R^n$, the {\em Lie bracket of $f$ and $g$} is the vector field defined as $\mathrm{ad}_f g(x)= \frac{\partial}{\partial x}g\, f- \frac{\partial}{\partial x}f\, g$. 
\end{defi}
Analogously to what done for the Lie derivative, multiple Lie bracket can be defined as $\mathrm{ad}_f^{k} g=\mathrm{ad}_f \left(\mathrm{ad}_f^{k-1} g \right)$, for $k=1,2,\dots$, and with $\mathrm{ad}_f^{0} g=g$.

\begin{defi}
A set of linearly independent vector fields $\{f_1(x),\dots, f_m(x)\}$ is said to be {\em involutive} if and only if, for all $i,j$, there exist scalar functions $\alpha_{ijk}(x): \R^n\mapsto \R$ such that
$
\mathrm{ad}_{f_i}f_j(x)=\sum_{k=1}^m a_{ijk}(x)f_k(x).
$
\end{defi}  

\begin{defi}
A function $f(t,x): \R^+\times\R^n\mapsto \R^m$ is said to be {\em globally Lipschitz} with respect to $x$ if $\forall x,y \in \R^n,\,\forall t\geq 0$ there exists a constant $w>0$ s.t.
$
\|f(t,x)-f(t,y)\|\leq w \|x-y\|
$.
\end{defi}

\begin{defi}
A function $f(t,x): \R^+\times\R^n\mapsto \R$ is said to be {\em globally weak-Lipschitz} with respect to $x$ if $\forall x,y \in \R^n,\,\forall t\geq 0,\forall i\in\{1,\dots,n\}$ there exists a constant $w>0$ s.t.
$
(x_i-y_i)[f(t,x)-f(t,y)]\leq w \|x-y\|^2
$,
with $x_i$ and $y_i$ being the $i$-th element of vector $x$ and $y$ respectively.
\end{defi}

The following lemma points out a relation between Lipschitz and weak-Lipschitz functions.

\begin{lem}\label{lem:weak_Lipschitz}
A Lipschitz function $f(t,x)= \R^+\times\R^n\mapsto \R$, with Lipschitz constant $w$, is also weak-Lipschitz with the same constant $w$.
\end{lem}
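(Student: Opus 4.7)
The plan is to bound the product $(x_i-y_i)[f(t,x)-f(t,y)]$ by the product of the absolute values of its two factors, and then bound each factor separately using elementary properties of the Euclidean norm and the Lipschitz hypothesis.

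First I would observe that for any real numbers $a,b$ one has $ab \le |a|\,|b|$, so in particular
\[
(x_i-y_i)[f(t,x)-f(t,y)] \le |x_i-y_i|\cdot |f(t,x)-f(t,y)|.
\]
Next I would use two standard facts. On one hand, since $x_i-y_i$ is the $i$-th component of the vector $x-y\in\R^n$, it is majorized in absolute value by the Euclidean norm of the full vector: $|x_i-y_i|\le \|x-y\|$. On the other hand, since $f$ takes scalar values, the norm in the Lipschitz condition reduces to the absolute value, giving $|f(t,x)-f(t,y)|\le w\|x-y\|$ by hypothesis.

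Combining the three inequalities yields directly
\[
(x_i-y_i)[f(t,x)-f(t,y)] \le \|x-y\|\cdot w\|x-y\| = w\|x-y\|^2,
\]
which is exactly the weak-Lipschitz condition with the same constant $w$, valid for every $i\in\{1,\dots,n\}$, every $t\ge 0$, and every $x,y\in\R^n$.

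There is essentially no hard step here; the only subtlety is to notice that the weak-Lipschitz definition is applied component-wise to $x$ (so the coefficient $x_i-y_i$ is a scalar, not a vector norm) while the Lipschitz bound is a global norm bound on the vector difference. Once this is observed, the conclusion follows from the trivial inequality $|x_i-y_i|\le\|x-y\|$ and a single application of the Cauchy--Schwarz-type estimate $ab\le|a||b|$.
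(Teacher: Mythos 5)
Your proof is correct and follows essentially the same route as the paper: the paper's auxiliary vector function $F_i$ (with $f$ in the $i$-th slot and zeros elsewhere) is just a device for encoding the two estimates you use directly, namely $|x_i-y_i|\le\|x-y\|$ and the scalar Lipschitz bound $|f(t,x)-f(t,y)|\le w\|x-y\|$. Your unpacked version is, if anything, slightly more transparent than the paper's embedding argument.
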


\begin{proof}
Let us introduce the function $F_i(t,x)\in\R^n$ whose $i$-th entry is $f(t,x)$, while the other are null. It is immediate to observe that $\|F_i(t,x)-F_i(t,y)\|=\|f(t,x)-f(t,y)\|$. So, the lemma is proved considering, for all $i\in\{1,\dots,n\}$, the following relation
\begin{align*}
(x_i-y_i)[f(t,x)-f(t,y)]&=(x-y)^T[F_i(t,x)-F_i(t,y)]\\
& \leq w \|x-y\|^2.
\end{align*}
\end{proof}

\begin{rem}
In this paper we will assume that the function $f(t,x^{(i)})$ of the dynamical model given later in \eqref{eq:canonical_control_form} is weak-Lipschitz. However, as also reported in \cite{isma:13}, in presence of synchronization in a compact invariant set, this condition can be replaced by the assumption of locally Lipschitz $f(t,x^{(i)})$. Indeed, each locally Lipschitz function can be extended outside a compact set by appropriate extension theorems.   
\end{rem}

\section{Problem formulation}\label{sec:problem_formulation}
The aim of this paper is to study free synchronization for  multi-agent systems whose dynamics can be expressed in the canonical control form. 

More in detail, a dynamical agent $\dot{x}^{(i)}=X(t,u^{(i)},x^{(i)})$, with $x^{(i)}\in\R^n,u^{(i)}\in\R,t\in[0,+\infty)$ is said to be in {\em canonical control form} or {\em companion form} \cite{slli:91} when it is in the following form
\begin{eqnarray}\label{eq:canonical_control_form}
\dot{x}_1^{(i)}&=&x_2^{(i)} \nonumber \\
&\vdots & \nonumber \\
\dot{x}_n^{(i)}&=&f(t,x^{(i)})+g(t,x^{(i)})u^{(i)}, \nonumber\\
\end{eqnarray}
with $x^{(i)}=\left[x_1^{(i)},\dots, x_n^{(i)}\right]^T$ and with $x^{(i)}(0)=x_0^{(i)}$.  In this paper we will consider the case of\footnote{Notice that when a nonlinear system can be transformed in companion form, this condition is always guaranteed by the transformation procedure itself \cite{slli:91}.} $g(t,x^{(i)}(t))\neq 0$, $\forall t\geq 0$, and so the control input can be rewritten as $u^{(i)}=1/g(t,x^{(i)}(t))\tilde{u}^{(i)}$, with $\tilde{u}^{(i)}\in\R$.

The problem of free synchronization of a multi-agent system is formally defined in what follows.

\begin{defi}
A multi-agent system of identical agents $\dot{x}^{(i)}=X(t,u^{(i)},x^{(i)})$, with $i=1,\dots, N$, is {\em free synchronizable}, if for all the agents there exists a distributed control law $u_i=u_i(t,x_i,x_j)$ with $j\in \mathcal{N}_i$ such that
\begin{subequations}
\begin{align}
&\lim_{t\rightarrow \infty}\|x^{(i)}(t)-x^{(j)}(t)\| =0  &\forall i,j=1,\dots,N, \label{eq:synchronization_difference} \\
&\lim_{t\rightarrow \infty}\|u^{(i)}(t)\| =0  &\forall i=1,\dots,N. \label{eq:synchronization_input}
\end{align}
\end{subequations}
\end{defi}

The goal of this paper is to study the free synchronization of a multi-agent system with agents' dynamics expressed in the companion form \eqref{eq:canonical_control_form} or that can be transformed in such canonical form. We will give conditions  under which the problem of finding a distributed $u^{(i)}$ for each agent able to guarantee conditions \eqref{eq:synchronization_difference}-\eqref{eq:synchronization_input} is solvable. Furthermore, our proofs will be based on a constructive method, so a proportional-derivative ($PD^{n-1}$) and proportional-integral-derivative ($PI^hD^{n-1}$) control law able to synchronize the agents will be explicitly given. 
Specifically, in Section \ref{sec:synch_canonical_form} the problem of synchronization of systems in canonical control form will be addressed, while in Section \ref{sec:synch_canonical_transformation} the results will be extended to the relevant case of systems admitting a canonical transformation. 
Defining the average state trajectory as $\bar{x}(t):=[\bar{x}_1^T(t),\dots, \bar{x}_n^T(t)]^T\in\R^{n}$, with each $\bar{x}_k\in\R$  given by  
$
\bar{x}_k(t)=\frac{1}{N}\sum_{j=1}^N x_k^{(j)}(t)
$
we can define the stack error trajectory as $e:=\left[e_1^T,\dots,e_n^T\right]^T\in\R^{nN}$, with $e_k:=\left[e_k^{(1)},\dots,e_k^{(N)}\right]^T=x_k-\bar{x}_k 1_N$, with  $1_N$ vector of $N$ unitary entries.
It is easy to see that condition \eqref{eq:synchronization_difference} can be equivalently stated in the alternative way
$
\lim_{t\rightarrow \infty}\|e(t)\| =0 
$.

\section{Synchronization couplings constraints}\label{sec:ditributed_gain_selection}
In this section we identify, via an iterative procedure, a class of feedback gain matrices that suffices to achieve free synchronization for systems in companion form. Specifically, instead of using a closed form for identifying the conditions on the feedback gains which guarantee the synchronization, we will define it via such a procedure. The advantage is that, in this way, $PI^hD^{n-1}$ controllers can be defined in a general way and the results can be proven considering any arbitrary degree. 

When the case of a specific communication topology have to be considered, a second iterative procedure is also presented which further imposes on the feedback gains the topology constraint.
As we already said, our main purpose is to investigate the solvability of the higher-order free synchronization problem. However, since the methodology is constructive, the derived conditions 
can also be  used to either check if a given weighted topology allows synchronization or to synthesize distributed gains able to enforce synchronization.

We start giving the following definition.

\begin{defi}\label{def:L_N_matrices}
A symmetric matrix $L\in\R^{N\times N}$ is said to be an $\mathfrak{L}_N$ {\em matrix} if $L1_N=0_N$ and for its eigenvalues $\lambda_1,\dots,\lambda_N$ it holds that
$
0=\lambda_1<\lambda_2\leq\dots\leq \lambda_N
$,
where $1_N$ and $0_N$ are vectors of $N$ unitary and null entries respectively.
Furthermore, we denote with $\mathfrak{L}_N${\em -class}, the set of all $\mathfrak{L}_N$ matrices.
\end{defi}
Notice that the $N\times N$ Laplacian matrices \cite{goro:01} belong to the $\mathfrak{L}_N${\em -class}. However, the $\mathfrak{L}_N${\em -class} is more generic since we do not require the off diagonal elements of the matrix to be non positive and, furthermore, no specific structure of the matrices is a priori assumed.

Given $n, N\in \Bbb{N}$ such that $n, N\geq 2$, let us consider the matrices $\{L_{n-k}\}_{k\in\mathcal{K}}\in\mathfrak{L}_N${\em -class}, with $\mathcal{K}=\{0,\dots,n-1\}$ and pair-wise simultaneously diagonalizable. The orthonormal basis of the $L_{n-k}$ matrices is denoted as 
$
\left\{v^{(1)}, v^{(2)},\dots v^{(N)}\right\}, 
$
with $v^{(1)}=\nu$ {and $\nu=1/N \cdot 1_N$ as stated in Section \ref{sec:problem_formulation}}. For each matrix $L_{n-k}$, we denote with $\lambda_{n-k}^{(i)}$ the eigenvalue corresponding to the eigenvector $v^{(i)}$, for all $i\in\{2,\dots,N\}$, while $\lambda_{n-k}^{(1)}=0$ by Definition \ref{def:L_N_matrices}.
The algorithmic criteria we are going to give aim at identifying a class of synchronizing distributed feedback assigning spectral properties to the matrices $\{L_{n-k}\}_{k\in\mathcal{K}}$ and thus constraining their selection. In particular, for each eigenvalue $\lambda_{n-k}^{(i)}$ associated with eigenvector $v^{(i)}$, with $i\in\mathcal{I}=\{2,\dots,N\}$, we consider inequality constraints via an iterative procedure. 

First, let us consider the initialization $\lambda_0^{(i)}=0$; $0<\lambda_{n-1}^{(i)}<\lambda_{n}^{{(i)}^2}$; $\alpha_{n-1}^{(i)}=\min \mathrm{eig}\{A_{n-1}^{(i)}\}$; $\beta_{n-1}^{(i)}=\lambda_{n}^{{(i)}^2}-\lambda_{n-1}^{(i)}$; $\gamma_{n-1}^{(i)}=1$, with
\begin{equation*}
A_{n-1}^{(i)}= \left[\begin{array}{cc}
2\lambda_{n-1}^{(i)}\lambda_{n}^{(i)} & \lambda_{n-1}^{(i)}\\
\lambda_{n-1}^{(i)} & \lambda_{n}^{(i)}
\end{array}
\right].
\end{equation*} 
It is easy to see that the coefficients $\alpha_{n-1}^{(i)},\beta_{n-1}^{(i)},\gamma_{n-1}^{(i)}$ are strictly positive. Furthermore, for $k=2,\dots,n-1$, we define the iterative terms $\alpha_{n-k}^{(i)}=\min \mathrm{eig}\{A_{n-k}^{(i)}\}$; $\beta_{n-k}^{(i)}=\min \mathrm{eig}\{B_{n-k}^{(i)}\}$; $\gamma_{n-k}^{(i)}=\gamma_{n-k+1}^{(i)}+2\lambda_{n-k+2}^{(i)}$, with
\begin{equation*}
A_{n-k}^{(i)}= \left[\begin{array}{cc}
2\lambda_{n-k}^{(i)}\lambda_{n-k+1}^{(i)} &\gamma_{n-k}^{(i)} \lambda_{n-k}^{(i)}\\
\gamma_{n-k}^{(i)} \lambda_{n-k}^{(i)}& \alpha_{n-k+1}^{(i)}
\end{array}
\right], 
\end{equation*}
\begin{equation*}
B_{n-k}^{(i)}= \left[\begin{array}{cc}
\lambda_{n-k+1}^{{(i)}^2}-2\lambda_{n-k}^{(i)}\lambda_{n-k+2}^{(i)} &-\frac{1}{2}\gamma_{n-k+1}^{(i)} \lambda_{n-k}^{(i)}\\
-\frac{1}{2}\gamma_{n-k+1}^{(i)} \lambda_{n-k}^{(i)} & \beta_{n-k+1}^{(i)}
\end{array}
\right].
\end{equation*}
For convenience we also define $B_0^{(i)}$ and $\beta_0^{(i)}$ by iterating the above $B_{n-k}^{(i)}$ and $\beta_{n-k}^{(i)}$ up to step $k=n$.

Taking into account the above definitions, Algorithm \ref{alg:spectral_constranit_centralized_info} considers for each eigenvector $v^{(i)}$, with $i\in\mathcal{I}$, a particular choice on the corresponding eigenvalues $\lambda_{n-k}^{(i)}$, with $i\in\mathcal{I}$ and $k\in\mathcal{K}$, in order to generate spectral constraints on the matrices $\{L_{n-k}\}_{k\in\mathcal{K}}$. In particular, each $L_{n-k}$ is computed as $L_{n-k}=UD_{n-k}U^T$, with matrices $U=[\nu|v^{(2)}|\dots|v^{(n)}]$ and $D_{n-k}=diag\{0,\lambda_{n-k}^{(2)},\dots,\lambda_{n-k}^{(N)}\}$.

\begin{algorithm}
\caption{Spectral constraints assignment}\label{alg:spectral_constranit_centralized_info}

\begin{algorithmic}[1]

\ForAll{i=2, \dots, N}

\For{k=2,\dots, n-1}

\State Compute $\scriptstyle \alpha_{n-k+1}^{(i)}$  

\State Compute $\scriptstyle \gamma_{n-k}^{(i)}$  

\State Choose a $\scriptstyle \lambda_{n-k}^{(i)}$ that satisfies the following inequalities

\begin{subequations}
\begin{align}
\scriptstyle  & \scriptstyle \lambda_{n-k}^{(i)}>0, \label{eq:system_lambda_n_k_positive} \\
\scriptstyle & \scriptstyle \lambda_{n-k}^{(i)}<\frac{2\lambda_{n-k+1}^{(i)}\alpha_{n-k+1}^{(i)}}{\gamma_{n-k}^{{(i)}^2}}, \label{eq:system_lambda_n_k_first_order_inequality}\\
\scriptstyle &\scriptstyle \gamma_{n-k+1}^{{(i)}^2}\lambda_{n-k}^{{(i)}^2}+8\lambda_{n-k+2}^{(i)}\beta_{n-k+1}^{(i)}\lambda_{n-k}^{{(i)}}-4\lambda_{n-k+1}^{{(i)}^2}\beta_{n-k+1}^{(i)}<0. \label{eq:system_lambda_n_k_second_order_inequality}
\end{align}
\end{subequations}

\State Define $\scriptstyle B_{n-k}^{(i)}$

\State Compute $\scriptstyle \beta_{n-k}^{(i)}$

\EndFor

\EndFor

\For{k=0,\dots, n-1}
\State Set $\scriptstyle D_{n-k}\gets diag\{0,\lambda_{n-k}^{(2)},\dots,\lambda_{n-k}^{(N)}\}$

\State Set $\scriptstyle L_{n-k}\gets UD_{n-k}U^T$

\EndFor

\end{algorithmic}
\end{algorithm}

Notice that, the inequalities \eqref{eq:system_lambda_n_k_positive}\--\eqref{eq:system_lambda_n_k_second_order_inequality} are always feasible, since the right hand side of \eqref{eq:system_lambda_n_k_first_order_inequality} is striclty positive and the second order equation associated with \eqref{eq:system_lambda_n_k_second_order_inequality} has one strictly negative and one strictly positive root. 
Furthermore, notice also that matrices $\{L_{n-k}\}_{k\in\mathcal{K}}\in\mathfrak{L}_N${\em -class} and, as said before, in general they are not Laplacian matrices of any graph $\mathcal{G}$. 
The collection of pair-wise simultaneously diagonalizable matrices obtained imposing the iterative constraints \eqref{eq:system_lambda_n_k_positive}\--\eqref{eq:system_lambda_n_k_second_order_inequality} is formalized in the following definition.

\begin{defi}\label{def:N_n_collection}
Given two integers $N,n\in\Bbb{N}$, with $n,N\geq 2$, the collection of matrices  $\{L_{n-k}\}_{k\in\mathcal{K}}\in\mathfrak{L}_N${\em -class}, with $\mathcal{K}=\{0,\dots,n-1\}$, is said to be a $(N,n)${\em -collection} if the matrices are pair-wise simultaneously diagonalizable and satify the iterative spectrum constraints \eqref{eq:system_lambda_n_k_positive}\--\eqref{eq:system_lambda_n_k_second_order_inequality} of Algorithm \ref{alg:spectral_constranit_centralized_info}.
\end{defi}

Notice that, since inequalities \eqref{eq:system_lambda_n_k_positive}\--\eqref{eq:system_lambda_n_k_second_order_inequality}  are always feasible, such collection is never empty.

When a specific interconnection topology $\mathcal{G}$ needs to be taken into account, the more restrictive $(\mathcal{G},n)${\em -collection} can be considered, as it is clear from the following definition. 

\begin{defi}\label{def:G_n_collection}
Given a connected graph $\mathcal{G}$ of $N$ nodes and an integer $n\in\Bbb{N}$, with $n,N\geq 2$, the collection of matrices  $\{L_{n-k}\}_{k\in\mathcal{K}}\in\mathfrak{L}_N${\em -class}, with $\mathcal{K}=\{0,\dots,n-1\}$, is said to be a $(\mathcal{G},n)${\em -collection} if they are a $(N,n)${\em -collection} and $\{L_{n-k}\}_{k\in\mathcal{K}}$ are weighted Laplacian matrices of the graph $\mathcal{G}$.
\end{defi}

For the existence of a $(\mathcal{G},n)${\em -collection} associated to a given connected graph $\mathcal{G}$, the following lemma can be given. 

\begin{lem}
Given a connected graph $\mathcal{G}$ of $N$ nodes and an integer $n\in\Bbb{N}$, with $n,N\geq 2$, there always exists an associated $(\mathcal{G},n)${\em -collection}.
\end{lem}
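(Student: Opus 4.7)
The plan is to produce the required collection by scaling a single reference matrix, so that all the spectral conditions collapse onto one scalar degree of freedom per step. Let $L$ denote the standard (combinatorial) Laplacian of the connected graph $\mathcal{G}$. Since $\mathcal{G}$ is connected, $L$ is symmetric, satisfies $L 1_N = 0_N$, and has a simple zero eigenvalue, so $L \in \mathfrak{L}_N$-class by Definition \ref{def:L_N_matrices}. Denote its eigenvalues $0 = \mu_1 < \mu_2 \leq \cdots \leq \mu_N$ and fix an associated orthonormal eigenbasis $v^{(1)} = \nu, v^{(2)}, \ldots, v^{(N)}$. I will look for the collection in the ansatz $L_{n-k} = c_{n-k} L$ with $c_{n-k} > 0$, for $k \in \mathcal{K}$.

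With this choice, each $L_{n-k}$ is a weighted Laplacian of the same graph $\mathcal{G}$ (every original edge weight is multiplied by the positive constant $c_{n-k}$), and all matrices of the family are diagonalized by the common orthogonal matrix $U = [\,\nu \,|\, v^{(2)} \,|\, \cdots \,|\, v^{(N)}\,]$; hence they are pairwise simultaneously diagonalizable and lie in $\mathfrak{L}_N$-class. What remains is to pick the scalars $c_1, \ldots, c_n > 0$ so that the spectral constraints \eqref{eq:system_lambda_n_k_positive}--\eqref{eq:system_lambda_n_k_second_order_inequality} of Algorithm \ref{alg:spectral_constranit_centralized_info} hold for every $i \in \{2, \ldots, N\}$. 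Since the ansatz gives $\lambda_{n-k}^{(i)} = c_{n-k} \mu_i$, inequality \eqref{eq:system_lambda_n_k_positive} just asks $c_{n-k} > 0$, while \eqref{eq:system_lambda_n_k_first_order_inequality}--\eqref{eq:system_lambda_n_k_second_order_inequality}, once the common positive factor $\mu_i$ (resp.\ $\mu_i^2$) is cancelled, reduce to explicit upper bounds on $c_{n-k}$ indexed by $i$. I would proceed inductively: pick $c_n > 0$ arbitrarily, then pick $c_{n-1} \in (0, c_n^2 \mu_2)$ so that the initialization $0 < c_{n-1} \mu_i < (c_n \mu_i)^2$ holds uniformly in $i \geq 2$; and for $k = 2, \ldots, n-1$, given the previously selected $c_{n-k+1}, c_{n-k+2}$, choose $c_{n-k}$ strictly smaller than the minimum over $i \in \{2, \ldots, N\}$ of the positive upper bounds dictated by \eqref{eq:system_lambda_n_k_first_order_inequality} and \eqref{eq:system_lambda_n_k_second_order_inequality}.

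The step that requires the most care is verifying that this minimum over $i$ is strictly positive at every iteration. The feasibility argument given in the discussion following Algorithm \ref{alg:spectral_constranit_centralized_info} shows that, for each fixed $i$, the right-hand side of \eqref{eq:system_lambda_n_k_first_order_inequality} is strictly positive and the quadratic associated with \eqref{eq:system_lambda_n_k_second_order_inequality} admits a strictly positive root $r_i^{(k)} > 0$. Under the scaling ansatz the latter constraint translates to $c_{n-k} < r_i^{(k)} / \mu_i$, and the finiteness of the index set $\{2, \ldots, N\}$ together with $\mu_i > 0$ guarantees that the minimum of these bounds over $i$ is strictly positive. Hence the admissible set for $c_{n-k}$ is a non-empty open interval, the induction runs to $k = n-1$, and the resulting collection $\{c_{n-k} L\}_{k \in \mathcal{K}}$ satisfies simultaneously the diagonalizability, the weighted-Laplacian structure and the iterative spectral inequalities, which is precisely the $(\mathcal{G}, n)$-collection required by Definition \ref{def:G_n_collection}.
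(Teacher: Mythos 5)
Your proposal is correct and is essentially the paper's own argument: the paper proves the lemma by pointing to Algorithm \ref{alg:spectral_constranit_decentralized_info}, which constructs exactly the scaled family $L_{n-k}=l_{n-k}L(\mathcal{G})$ with the scalars chosen recursively below the per-eigenvalue bounds $r_{n-k,1}^{(i)}$ (from \eqref{eq:system_lambda_n_k_first_order_inequality}) and $r_{n-k,2}^{(i)}$ (the positive root of the quadratic in \eqref{eq:system_lambda_n_k_second_order_inequality}), minimized over the finite index set $i\in\{2,\dots,N\}$. Your only cosmetic deviations --- parametrizing each $L_{n-k}$ directly as $c_{n-k}L$ rather than via the cumulative ratios $\bar{\rho}_{n-k}$, and fixing the combinatorial Laplacian instead of an arbitrary weighted one --- are equivalent reparametrizations of the same construction.
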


\begin{proof}
The existence of a $(\mathcal{G},n)${\em -collection} can be proved in a constructive way via Algorithm \ref{alg:spectral_constranit_decentralized_info}.
\end{proof}

\begin{algorithm}
\caption{Spectral constraints assignment for constrained topologies}\label{alg:spectral_constranit_decentralized_info}

\begin{algorithmic}[1]

\State Choose any $L(\mathcal{G})$ which is a compatible weighted Laplacian of any  desired connected graph $\mathcal{G}$ .

\State Set $\scriptstyle  L_n\gets L$ 

\State Set $\scriptstyle \{\lambda_{n}^{(1)},\lambda_{n}^{(2)},\dots, \lambda_{n}^{(N)} \}\gets \mathrm{eig}\{L_{n}\}$

\For{i=2, \dots, N}

\State Set $\scriptstyle  s_{n-1}^{(i)}\gets \lambda_{n}^{{(i)}^2}$ 

\State Set $\scriptstyle  \rho_{n-1}^{(i)}\gets \frac{s_{n-1}^{(i)}}{\lambda_n^{(i)}}$

\EndFor

\State Choose $\scriptstyle  0<\bar{\rho}_{n-1}<\min_{i=2, \dots, N}\rho_{n-1}^{(i)}$

\State Set $\scriptstyle  L_{n-1}\gets \bar{\rho}_{n-1}L_n$

\For{k=2,\dots, n-1}

\State Set $\scriptstyle  \{\lambda_{n-k+1}^{(1)},\lambda_{n-k+1}^{(2)},\dots, \lambda_{n-k+1}^{(N)}\}\gets \mathrm{eig}\{L_{n-k+1}\}$

\For{i=2, \dots, N}

\State Compute $\scriptstyle \beta_{n-k+1}^{(i)}$ 

\State Compute $\scriptstyle  \alpha_{n-k+1}^{(i)}$  

\State Compute $\scriptstyle  \gamma_{n-k}^{(i)}$  

\State Set $\scriptstyle s_{n-k}^{(i)}\gets \min\{r_{n-k,1}^{(i)},r_{n-k,2}^{(i)}\}$, with

\begin{align*}
\scriptstyle  r_{n-k,1}^{(i)}& \scriptstyle = \frac{2\lambda_{n-k+1}^{(i)}\alpha_{n-k+1}^{(i)}}{\gamma_{n-k}^{{(i)}^2}},\\
\scriptstyle r_{n-k,2}^{(i)}&\scriptstyle  = \mathop{\sup}\limits_{r\in\R}\left\{ 
\gamma_{n-k+1}^{{(i)}^2}r^2+8\lambda_{n-k+2}^{(i)}\beta_{n-k+1}^{(i)}r-4\lambda_{n-k+1}^{{(i)}^2}\beta_{n-k+1}^{(i)}<0  \right\}.
\end{align*}

\State Set $\scriptstyle \rho_{n-k}^{(i)}\gets \frac{s_{n-k}^{(i)}}{\lambda_{n-k+1}^{(i)}}$

\EndFor

\State Choose $\scriptstyle 0<\bar{\rho}_{n-k}<\min_{i=2, \dots, N}\rho_{n-k}^{(i)}$

\State Set $\scriptstyle L_{n-k}\gets \bar{\rho}_{n-k}L_{n-k+1}$

\EndFor

\end{algorithmic}
\end{algorithm}

Roughly speaking, the procedure described in Algorithm \ref{alg:spectral_constranit_decentralized_info} allows to obtain $\{L_{n-k}\}_{k\in\mathcal{K}}$ which are weighted Laplacian for any arbitrary connected graph $\mathcal{G}$. Their expression is $L_{n-k}=l_{n-k}L$, where $L=L(\mathcal{G})$ and $l_{n-k}$ is a positive gain defined by the recursive formula $l_{n-k}=\bar{\rho}_{n-k}l_{n-k+1}$, with $l_n=1$. Furthermore, the fact that such matrices are also a $(N,n)${\em -collection} can be trivially showed by noticing that the spectral constraints \eqref{eq:system_lambda_n_k_positive}\--\eqref{eq:system_lambda_n_k_second_order_inequality} are satisfied.

\begin{rem}
It is worth noticing that Algorithm \ref{alg:spectral_constranit_centralized_info} has been introduced specifically to define a  $(N,n)${\em -collection} (and so also the special case of $(\mathcal{G},n)${\em -collection}). The spectral constraints assigned in such an iterative way to the matrices in the collection will be shown to be sufficient for the network synchronization. Notice also that in several papers in the literature, sufficient conditions on the spectrum of the Laplacian matrix of the graph are given in order to prove synchronization, and the same happens in the current paper. However, due to the fact that any possible system degree is here considered, the conditions are given through an iterative procedure rather than using a closed expression. 

It is also worth noticing that the fact that a $(\mathcal{G},n)${\em -collection}) is never empty for any connected graph $\mathcal{G}$ will ensure the solvability of the higher-order free synchronization problem with local controllers.
\end{rem}

\section{Synchronization of systems in companion form}\label{sec:synch_canonical_form}
In this section we give the main results of the paper, i.e., proving that local controllers are able to synchronize a network of nonlinear systems in companion form, as stated in Section \ref{sec:problem_formulation}. Specifically, here we propose both a pure proportional and an integral-proportional controller. 
{It is worth noticing} that, in our approach, the analytic expression of the Lyapunov function that allows to prove the results is parametrized by the system order $n$. Indeed, its expression will be obtained by means of the $(N,n)${\em -collection} generated with Algorithm\ref{alg:spectral_constranit_centralized_info} for any given system order.

\subsection{Synchronization with $PD^{n-1}$ controllers} \label{sec:P_controller}
The following theorem gives conditions on the existence of a solution for the free synchronization problem of dynamical systems in companion form. 

\begin{thm}\label{thm:P_controller_companion_form}
Let us consider $N$ dynamical agents in companion form \eqref{eq:canonical_control_form} and suppose that   $f(t,x^{(i)})$ is weak-Lipschitz with constant $w$. Let us consider a $(N,n)${\em -collection} $\{L_1,\dots,L_n\}$ (or, more specifically, a $(\mathcal{G},n)${\em -collection} associated with a connected graph $\mathcal{G}$). 
Then, the free synchronization problem stated in Section \ref{sec:problem_formulation} is solvable with the following  proportional-derivative controllers
\begin{equation*}
\tilde{u}^{(i)}(t)=l\sum_{k=1}^n\sum_{j=1}^N l_{kij}(x_k^{(j)}(t)-x_k^{(i)}(t)), \quad i=1,\dots,N
\end{equation*}
with $l_{kij}$ being the elements of the matrices $L_k=[l_{kij}]$, with $k=1,\dots,n$, and $l>1$ being a scalar gain satisfying 
\begin{equation}\label{eq:inequality_l}
l>\frac{1}{\tilde{\beta}}(w\bar{\lambda}_{\max}+\tilde{\beta}-\bar{\beta}),
\end{equation}
where in the above expression $\bar{\beta}$, $\bar{\lambda}_{\max}$ and $\tilde{\beta}$ are positive scalars defined respectively as $\bar{\beta}=\min_{i=2,\dots,N}\beta_0^{(i)}$, $\bar{\lambda}_{\max}=\max\mathrm{eig}\{\bar{L}\}$, with $\bar{L}=\sum_{k=1}^n L_k$,  and $\tilde{\beta}=\min_{i=2,\dots,N}\left\{\bar{\beta},\lambda_{n}^{{(i)}^2}\right\}$. 
\end{thm}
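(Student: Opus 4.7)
\emph{Step 1: error dynamics and modal decomposition.} I would first write the closed-loop dynamics in error coordinates. Stacking the agents and using the companion-form structure together with $L_k\mathbf 1_N=0$, the stack errors satisfy $\dot e_k=e_{k+1}$ for $k=1,\dots,n-1$ and
\begin{equation*}
\dot e_n=\bigl(F(t,x)-\bar F(t)\mathbf 1_N\bigr)-l\sum_{k=1}^{n}L_k e_k,
\end{equation*}
where $F=[f(t,x^{(1)}),\dots,f(t,x^{(N)})]^T$ and $\bar F$ is its mean. Because the matrices $\{L_k\}$ are pairwise simultaneously diagonalizable with a common orthonormal basis $\{v^{(1)}=\nu,v^{(2)},\dots,v^{(N)}\}$, I would project each $e_k$ onto this basis, writing $e_k=\sum_i z_k^{(i)}v^{(i)}$. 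Since $e_k\perp\mathbf 1_N$ by construction of the mean, $z_k^{(1)}=0$, and for each $i\in\{2,\dots,N\}$ the modal components satisfy a chain of integrators with linear feedback of the form $\dot z_n^{(i)}=\phi^{(i)}(t)-l\sum_{k}\lambda_k^{(i)}z_k^{(i)}$, where $\phi^{(i)}$ is the $i$-th modal component of $F-\bar F\mathbf 1_N$.

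\emph{Step 2: iterative Lyapunov construction.} The natural candidate for the overall Lyapunov function is $V=\sum_{i=2}^N V^{(i)}(z_1^{(i)},\dots,z_n^{(i)})$, where each $V^{(i)}$ is a quadratic form built in the same recursive manner as the scalars $\alpha_{n-k}^{(i)},\beta_{n-k}^{(i)},\gamma_{n-k}^{(i)}$ of Algorithm~\ref{alg:spectral_constranit_centralized_info}. Concretely, I would start from a ``top'' piece, e.g.\ $\tfrac12(z_n^{(i)})^2$ plus a cross term $\gamma_{n-1}^{(i)}\lambda_{n-1}^{(i)} z_{n-1}^{(i)}z_n^{(i)}$ together with a quadratic in $z_{n-1}^{(i)}$, and then add, at each recursion step $k$, a new quadratic in $z_{n-k}^{(i)}$ plus a cross term with $z_{n-k+1}^{(i)}$ weighted by $\gamma_{n-k}^{(i)}\lambda_{n-k}^{(i)}$. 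Positive definiteness of each increment is precisely the positivity of the leading minors of $A_{n-k}^{(i)}$ — which is encoded by $\alpha_{n-k}^{(i)}>0$ via Sylvester's criterion and the feasibility of \eqref{eq:system_lambda_n_k_first_order_inequality}. By induction this yields a positive-definite $V^{(i)}$ for every mode, hence $V(e)>0$ whenever $e\neq 0$.

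\emph{Step 3: computing $\dot V$ and bounding the nonlinear part.} Differentiating $V^{(i)}$ along the mode dynamics and collecting ``linear'' terms (from the chain of integrators and the $l\lambda_k^{(i)}z_k^{(i)}$ feedbacks) and ``nonlinear'' terms (from $\phi^{(i)}$), the linear part decomposes mode-by-mode into quadratic forms in consecutive pairs $(z_{n-k}^{(i)},z_{n-k+1}^{(i)})$ whose defining matrices are exactly $B_{n-k}^{(i)}$ up to the factor $l$. Thus the positivity of each $\beta_{n-k}^{(i)}$, guaranteed by \eqref{eq:system_lambda_n_k_second_order_inequality} together with Sylvester's criterion, shows the linear part of $\dot V$ is bounded above by $-l\bar\beta\|e\|^2$ after iterating down to $\beta_0^{(i)}$ and using $\bar\beta=\min_i\beta_0^{(i)}$ and $\tilde\beta=\min_i\{\bar\beta,\lambda_n^{(i)2}\}$. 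For the nonlinear contribution I would use the standard symmetrization $e_n^T(F-\bar F\mathbf 1_N)=\frac{1}{2N}\sum_{i,j}(x_n^{(i)}-x_n^{(j)})[f(t,x^{(i)})-f(t,x^{(j)})]$, apply the weak-Lipschitz bound on each pair, and use $\sum_{i,j}\|x^{(i)}-x^{(j)}\|^2=2N\|e\|^2$ to obtain $e_n^T(F-\bar F\mathbf 1_N)\le w\|e\|^2$. Combined with the cross terms produced by differentiating the $z_k^{(i)}z_{k+1}^{(i)}$ parts of $V$, which the Rayleigh inequality bounds by $w\bar\lambda_{\max}\|e\|^2$, I obtain $\dot V\le -\bigl(l\tilde\beta-w\bar\lambda_{\max}-\tilde\beta+\bar\beta\bigr)\|e\|^2$.

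\emph{Step 4: conclusion.} The inequality \eqref{eq:inequality_l} on $l$ is exactly what makes the coefficient above strictly positive, so $\dot V\le -\eta\|e\|^2$ for some $\eta>0$, hence global exponential convergence $e(t)\to 0$, which is \eqref{eq:synchronization_difference}. Since $\tilde u^{(i)}$ depends linearly on $e$ and (by assumption) $g(t,x^{(i)})$ stays bounded away from zero, $u^{(i)}=\tilde u^{(i)}/g\to 0$ and \eqref{eq:synchronization_input} follows.

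\emph{Expected obstacle.} The hard step is Step~2--3: identifying the correct iterative Lyapunov function so that its time derivative reorganizes exactly into the blocks $A_{n-k}^{(i)}$ (for positive definiteness of $V^{(i)}$) and $B_{n-k}^{(i)}$ (for negative definiteness of the linear part of $\dot V^{(i)}$). Keeping the bookkeeping of all cross terms straight while the order $n$ is kept arbitrary — and making the indices in the recursions for $\alpha$, $\beta$, $\gamma$ match what actually appears when one differentiates — is where the bulk of the technical work lies.
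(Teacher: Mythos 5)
Your plan is correct in outline and follows essentially the same route as the paper's proof: the same recursive quadratic Lyapunov function assembled from the Algorithm~\ref{alg:spectral_constranit_centralized_info} quantities, positive definiteness via the $A_{n-k}^{(i)}$ blocks and Sylvester's criterion, dissipation via the $B_{n-k}^{(i)}$ blocks, the weak-Lipschitz symmetrization plus Rayleigh bound $w\bar{\lambda}_{\max}\|e\|^2$, and the identical final inequality — your mode-by-mode projection is simply the diagonalized presentation of the paper's stacked matrices $M_1$, $\mathop M\limits^{\sim}$, $H_1$, $\mathop H\limits^{\sim}$, whose definiteness the paper itself verifies per mode. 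One detail to get right in your Steps 2--3: the gain $l$ must multiply \emph{only} the leading principal block $M_\vartheta$ of $M_1$ (as in the paper's $\mathop M\limits^{\sim}$), so that the nonlinearity, which enters through the unscaled last block column $[L_1;\dots;L_n]$, contributes an $l$-independent bound and the integrator-chain cross terms cancel exactly against the feedback; accordingly your intermediate claim that the linear part is bounded by $-l\bar{\beta}\|e\|^2$ is too strong — the correct bound is $-\bigl(\bar{\beta}+(l-1)\tilde{\beta}\bigr)\|e\|^2$, which is in fact what your final inequality uses.
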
 

\begin{proof}
The proof of the above result is obtained by constructing a suitable Lyapunov function for the synchronization error trajectory able to exploit the specific canonical structure. To do so, we will divide the proof in two steps. In the first one we will define appropriate matrices upon which we will derive a candidate Lyapunov function. In the second part we will define the stack error system and we will prove the stability by means of such an obtained function. 
\newline
\textbf{Part 1: Definition of appropriate matrices.} Let us denote for convenience $L_{n+1}=1/2\cdot I_N$, $L_0=O_N$, and let us consider the positions $\lambda_{n+1}^{(i)}=1/2$ and $\lambda_0^{(i)}=0$.
We define the matrices $\{M_{n-k}\}_{k\in\mathcal{K}}$, with $M_{n-k}\in\R^{(k+1)N\times(k+1)N}$,  in the following recursive way
\begin{equation}\label{eq:matrix_M_n_k}
M_{n-k}=\left[
\begin{array}{cc}
M_{\varphi,n-k} & M_{\psi,n-k}\\
M^T_{\psi,n-k} & M_{n-k+1}
\end{array}
\right],
\end{equation}
with $M_{\varphi,n-k}=2L_{n-k}L_{n-k+1}$ and $M_{\psi,n-k}=\left[2L_{n-k}L_{n-k+2}, \dots, 2L_{n-k}L_{n}, 2L_{n-k}L_{n+1}\right]$, and where as terminal condition of the recursion we define $M_n=L_n$. 
It is easy to notice from the above definition that matrices $\{M_{n-k}\}_{k\in\mathcal{K}}$ are  $(k+1)\times (k+1)$ symmetric block matrices.

Analogously, we consider the $\{M_{n-k}^{(i)}\}_{(i,k)\in\mathcal{I}\times\mathcal{K}}$ matrices, with $M_{n-k}^{(i)}\in\R^{(k+1)\times (k+1)}$ and with $\mathcal{I}=\{2,\dots,N\}$, recursively defined as
\begin{equation}\label{eq:matrix_M_n_k_i}
M_{n-k}^{(i)}=\left[
\begin{array}{cc}
M_{\varphi,n-k}^{(i)} & M_{\psi,n-k}^{(i)}\\
{M^T}_{\psi,n-k}^{(i)} & M_{n-k+1}^{(i)}
\end{array}
\right],
\end{equation}
with $M_{\varphi,n-k}^{(i)}=2\lambda_{n-k}^{(i)}\lambda_{n-k+1}^{(i)}$, $M_{\psi,n-k}^{(i)}=\left[2\lambda_{n-k}^{(i)} \lambda_{n-k+2}^{(i)}, \dots, 2\lambda_{n-k}^{(i)}\lambda_{n}^{(i)}, 2\lambda_{n-k}^{(i)}\lambda_{n+1}^{(i)}\right]$, and with $M_n^{(i)}=\lambda_n^{(i)}$.

Together with matrices $\{M_{n-k}\}_{k\in \mathcal{K}}$ and $\{M_{n-k}^{(i)}\}_{(i,k)\in\mathcal{I}\times\mathcal{K}}$, we also define the symmetric matrices $\{H_{n-k}\}_{k\in \mathcal{K}}$, with $H_{n-k}\in\R^{(k+1)N\times(k+1)N}$ and $\{H_{n-k}^{(i)}\}_{(i,k)\in\mathcal{I}\times\mathcal{K}}$, with $H_{n-k}^{(i)}\in\R^{(k+1)\times(k+1)}$. Specifically,
\begin{equation}\label{eq:matrix_H_n_k}
H_{n-k}=\left[
\begin{array}{cc}
H_{\varphi,n-k} & H_{\psi,n-k}\\
H^T_{\psi,n-k} & H_{n-k+1}
\end{array}
\right],
\end{equation}
with $H_{\varphi,n-k}=L_{n-k}^2-2L_{n-k-1}L_{n-k+1}$, $H_{\psi,n-k}=\left[-L_{n-k-1}L_{n-k+2}, \dots, -L_{n-k-1}L_{n},-L_{n-k-1}L_{n+1}\right]$, and with  $H_n=L_n^2-L_{n-1}$, while $H_{n-k}^{(i)}$ is defined as
\begin{equation}\label{eq:matrix_H_n_k_i}
H_{n-k}^{(i)}=\left[
\begin{array}{cc}
H_{\varphi,n-k}^{(i)} & H_{\psi,n-k}^{(i)}\\
{H^T}_{\psi,n-k}^{(i)} & H_{n-k+1}^{(i)}
\end{array}
\right],
\end{equation}
with $H_{\varphi,n-k}^{(i)}=\lambda_{n-k}^{{(i)}^2}-2\lambda_{n-k-1}^{(i)}$, $H_{\psi,n-k}^{(i)}=\left[-\lambda_{n-k-1}^{(i)}\lambda_{n-k+2}^{(i)}, \dots, -\lambda_{n-k-1}^{(i)}\lambda_{n}^{(i)}, -\lambda_{n-k-1}^{(i)}\lambda_{n+1}^{(i)}\right]$, and with $H_n^{(i)}=\lambda_n^{{(i)}^2}-\lambda_{n-1}^{(i)}$.

From the above definitions it is immediate to see that $y^TM_1y=0$ and $y^TH_1y=0$, for all $y\in \Delta$. We are now going to prove that, for all $y\in \Delta^{\perp}-\{0\}$, i.e. for all the vector orthogonal to the synchronization manifold, we have  $y^TM_1y>0$ and $y^TH_1y>0$. This fact will be a key aspect later, where we will derive a Lyapunov function for the system.

First, let us consider the set of vectors 
\begin{align*}
S_{\Delta^{\perp}}=&\left\{
\varepsilon_1\otimes v^{(2)},\dots, \varepsilon_1\otimes v^{(N)},
\varepsilon_2\otimes v^{(2)},\dots, \varepsilon_2\otimes v^{(N)},\right.\\
& \left. \quad \dots,
\varepsilon_n\otimes v^{(2)},\dots, \varepsilon_n\otimes v^{(N)}
\right\},
\end{align*}
with $\varepsilon_i\in\R^n$ being the vector with a unitary entry in the $i$-th position and all the other entries null.

It is easy to see that $S_{\Delta^{\perp}}\subset\R^{nN}$ is a set of orthogonal unitary vectors and that $\Delta^{\perp}=span\{S_{\Delta^{\perp}}\}$. Hence, any vector $y\in\Delta^{\perp}$ can be expressed as a liner combination of the vectors in $S_{\Delta^{\perp}}$ or, more compactly, it can be expressed as $y=\sum_{i=2}^N y^{(i)}$, where $y^{(i)}=c^{(i)}\otimes v^{(i)}$ and where $c^{(i)}=(c_1^{(i)},\dots, c_n^{(i)})^T\in\R^n$ is a vector of coefficients. 

Now, due to the orthogonality of $v^{(i)}$ and $v^{(j)}$, we have that, for all $i\neq j$, ${y^{(j)}}^TM_1y^{(i)}=0$ and ${y^{(j)}}^TH_1y^{(i)}=0$, while remembering definitions \eqref{eq:matrix_M_n_k_i} and \eqref{eq:matrix_H_n_k_i}  we have ${y^{(i)}}^TM_1y^{(i)}={c^{(i)}}^TM_1^{(i)}c^{(i)}$ and ${y^{(i)}}^TH_1y^{(i)}={c^{(i)}}^TH_1^{(i)}c^{(i)}$. So,
\begin{equation}\label{eq:quadtratic_form_M1}
y^TM_1y=\sum_{i=2}^N{c^{(i)}}^TM_1^{(i)}c^{(i)}, 
\end{equation}
and 
\begin{equation}\label{eq:quadtratic_form_H1}
y^TH_1y=\sum_{i=2}^N{c^{(i)}}^TH_1^{(i)}c^{(i)}.
\end{equation}
Now, guaranteeing that ${c^{(i)}}^TM_1^{(i)}c^{(i)}>0$ and ${c^{(i)}}^TH_1^{(i)}c^{(i)}>0$, for all $c^{(i)}\in\R^{n}-\{0\}$ and for all $i\in \mathcal{I}$, implies the strict positivity of \eqref{eq:quadtratic_form_M1} and \eqref{eq:quadtratic_form_H1}, respectively. For this reason, the rest of this first part of the proof {is devoted to showing} the positive definiteness of matrices $M_1^{(i)}$ and $H_1^{(i)}$. Specifically, we first focus on proving the positivity of $M_1^{(i)}$ via an induction argument which exploits the recursive structure of the matrix itself. 
First of all, we can see that the trailing principal submatrix 
\[
M_{n-1}^{(i)}=
\left[
\begin{array}{cc}
2\lambda_{n-1}^{(i)}\lambda_{n}^{(i)} & \lambda_{n-1}^{(i)}\\
\lambda_{n-1}^{(i)} & \lambda_{n}^{(i)}
\end{array}
\right],
\]
is {positively defined}. Indeed, the Sylvester's criterion can be applied since $\lambda_n^{(i)}>0$ and its determinant is positive due to the choice $\lambda_{n-1}^{(i)}<\lambda_{n}^{{(i)}^2}$ (initialization of Algorithm \ref{alg:spectral_constranit_centralized_info}). So, trivially we have that $\alpha_{n-1}^{(i)}>0$ and, since $M_{n-1}^{(i)}=A_{n-1}^{(i)}$, the relation 
$
z^TM_{n-1}^{(i)}z\geq z^TA_{n-1}^{(i)}z\geq \alpha_{n-1}^{(i)}z^Tz
$
holds for all $z\in \R^2$.
Furthermore, $\gamma_{n-1}>0$ trivially holds.
For the induction argument, we suppose that the same relation holds for a generic $M_{n-k+1}^{(i)}$, with $k\geq 2$, namely
\begin{equation}\label{eq:inequality_M_n_k_plus_1}
z^TM_{n-k+1}^{(i)}z\geq z^TA_{n-k+1}^{(i)}z\geq \alpha_{n-k+1}^{(i)}z^Tz, \quad\forall z\in \R^k,
\end{equation}
with $\alpha_{n-k+1}^{(i)}>0$. We also suppose that $\gamma_{n-k}^{(i)}>0$. 
With such an assumption, we study the quadratic form $\bar{z}_{k}^TM_{n-k}\bar{z}_{k}$, for all the vectors $\bar{z}_{k}\in \R^{k+1}-\{0\}$, and where we have defined $\bar{z}_{k}=(z_1,\dots,z_{k+1})^T$. For convenience, we introduce the subvector $\bar{z}_{k-1}$ of the last $k$ elements of $\bar{z}_k$, and so, in block form, we have $\bar{z}_{k}=[z_1|\bar{z}_{k-1}^T]^T$. We obtain
\begin{align*}
\bar{z}_{k}^TM_{n-k}\bar{z}_{k}=&2\lambda_{n-k}^{(i)}\lambda_{n-k+1}^{(i)}z_1^2+\sum_{j=2}^{k}4\lambda_{n-k}^{(i)}\lambda_{n-k+j}^{(i)}z_1z_j+\\
& 2\lambda_{n-k}^{(i)}z_1z_{k+1}+\bar{z}_{k-1}^TM_{n-k+1}^{(i)}\bar{z}_{k-1}.
\end{align*}
Considering now $z_1z_h=\min_{j=2,\dots,k+1}z_1z_j, $
and remembering inequality \eqref{eq:inequality_M_n_k_plus_1}, we obtain 
\begin{align*}
\bar{z}_{k}^TM_{n-k}\bar{z}_{k} \geq & 2\lambda_{n-k}^{(i)}\lambda_{n-k+1}^{(i)}z_1^2+\\
 & 2\left[1+\sum_{j=2}^{k}2\lambda_{n-k+j}^{(i)}
\right]\lambda_{n-k}^{(i)}z_1z_h+\alpha_{n-k+1}^{(i)}z_h^2\\
=& 2\lambda_{n-k}^{(i)}\lambda_{n-k+1}^{(i)}z_1^2+2\gamma_{n-k}^{(i)}\lambda_{n-k}^{(i)}z_1z_h+\\
 & \alpha_{n-k+1}^{(i)}z_h^2.
\end{align*}
Now, considering the definition of $A_{n-k}^{(i)}$, it is immediate to notice that the quadratic expression above can be written as $[z_i,z_h]A_{n-k}^{(i)}[z_i,z_h]^T$. So, its positivity is guaranteed if and only if the matrix $A_{n-k}^{(i)}$ is {positively defined}. Since $\alpha_{n-k+1}^{(i)}>0$, and since condition \eqref{eq:system_lambda_n_k_first_order_inequality} in 
Algorithm \ref{alg:spectral_constranit_centralized_info} imposes the positivity of the determinant of $A_{n-k}^{(i)}$, applying again the Sylvester's criterion we conclude that $A_{n-k}^{(i)}>0$. 
Iterating the reasoning for all $k=2,\dots,n-1$ we obtain $M_1^{(i)}>0$.
 
An analogous reasoning can be adopted to prove positive definiteness of $H_1^{(i)}$. Indeed, it is immediate to see that the trailing principal submatrix $H_{n}^{(i)}\in\R^{1\times 1}$ is positive since $H_{n}^{(i)}=\beta_{n-1}^{(i)}=\lambda_n^{{(i)}^2}-\lambda_{n-1}^{(i)}>0$, again for the initial choice $0<\lambda_{n-1}^{(i)}<\lambda_n^{{(i)}^2}$. Obviously, the relation 
\[
z^TH_n^{(i)}z\geq\beta_{n-1}^{(i)}z^Tz
\]
holds for all $z\in\R$.
As done for $M_{n-k}^{(i)}$, also for proving the positive definiteness of $H_{n-k}^{(i)}$ an induction argument will be used. To do so, we suppose
\begin{equation}\label{eq:inequality_N_n_k_plus_1}
z^T H_{n-k+1}^{(i)}z\geq \beta_{n-k}z^T z, \quad \forall z\in \R^{k},
\end{equation}
with $\beta_{n-k}>0$. Furthermore, from the iterative reasoning applied for proving that $M_1^{(i)}>0$, we implicitly obtained that $\gamma_{n-k}^{(i)}>0$ for all $k=1, \dots, n-1$, since $\lambda_{n-k}^{(i)}>0$ for all $k=1, \dots, n-1$.
Defining $\bar{z}_{k}$ as before, we can write the quadratic form $\bar{z}_{k}^{T}H_{n-k}^{(i)}\bar{z}_{k}$, for all $\bar{z}_{k}\in\R^{k+1}-\{0\}$,   as
\begin{align*}
\bar{z}_{k}^{T}H_{n-k}^{(i)}\bar{z}_{k}=&\left[\lambda_{n-k}^{{(i)}^2}-2\lambda_{n-k-1}^{(i)}\lambda_{n-k+1}^{(i)}\right]z_1^2-\\
& \sum_{j=2}^{k}2\lambda_{n-k-1}^{(i)}\lambda_{n-k+j}z_1z_j-\\
& \lambda_{n-k-1}^{(i)}z_1z_{k+1}+\bar{z}_{k-1}^{T}H_{n-k+1}^{(i)}\bar{z}_{k-1}. 
\end{align*}
Considering $z_1z_h=\max_{j=2,\dots,k+1}z_1z_j,$ and taking into account \eqref{eq:inequality_N_n_k_plus_1}, we obtain the following inequality
\begin{align*}
\bar{z}_{k}^{T}H_{n-k}^{(i)}\bar{z}_{k}\geq &
\left[\lambda_{n-k}^{{(i)}^2}-2\lambda_{n-k-1}^{(i)}\lambda_{n-k+1}^{(i)}\right]z_1^2- \\
& \left[1+\sum_{j=2}^k 2\lambda_{n-k+j}^{(i)}\right]\lambda_{n-k-1}^{(i)}z_1z_h+\beta_{n-k}^{(i)}z_h^2\\
=&\left[\lambda_{n-k}^{{(i)}^2}-2\lambda_{n-k-1}^{(i)}\lambda_{n-k+1}^{(i)}\right]z_1^2-\\
&\gamma_{n-k}^{(i)}z_1z_h+\beta_{n-k}^{(i)}z_h^2.
\end{align*}
Observing that the above quadratic form can be obtained from $[z_i,z_h]B_{n-k-1}^{(i)}[z_i,z_h]^T$, since $\beta_{n-k}^{(i)}>0$ for the Sylvester's criterion the positive definiteness of $B_{n-k-1}$ is guaranteed by the positivity of its determinant. The latter condition is given by \eqref{eq:system_lambda_n_k_second_order_inequality} of Algorithm \ref{alg:spectral_constranit_centralized_info} evaluated at $k+1$. Repeating the reasoning for $k=1,\dots,n$ we obtain $z^T H_{1}^{(i)}z \geq \beta_0^{(i)}z^T z$, with $\beta_0^{(i)}>0$, which guarantees positive definiteness of $H_{1}^{(i)}$. It is also possible to further analyze the quadratic form \eqref{eq:quadtratic_form_H1}, as this will turn useful later in Step 2 of the proof. For all $y\in\Delta^\perp-\{0\}$ we have, 
\begin{align}
y^TH_1y=\sum_{i=2}^N{c^{(i)}}^TH_1^{(i)}c^{(i)}&\geq\sum_{i=2}^N\beta_{0}^{(i)}{c^{(i)}}^Tc^{(i)}\nonumber \\
& \geq \bar{\beta}\sum_{i=2}^N{c^{(i)}}^Tc^{(i)}\geq  \bar{\beta}y^T y,\label{eq:inequality_N_1}
\end{align}
where $\bar{\beta}=\min_{i=2,\dots,N}\beta_0^{(i)}$ is a positive scalar and where we considered 
$
{y^{(i)}}^Ty^{(i)}=\left[{c^{(i)}}^T\otimes {v^{(i)}}^T\right]\left[{c^{(i)}}\otimes {v^{(i)}}\right]
= {c^{(i)}}^T{c^{(i)}},
$
and where ${y^{(i)}}^T{y^{(j)}}=0$, for $i\neq j$.
\newline
\textbf{Part 2: Lyapunov stability analysis.} For convenience we consider the error stack system of the form  
\begin{eqnarray}\label{eq:stack_error_form}
\dot{e}_1&=&e_2 \nonumber \\
&\vdots & \nonumber \\
\dot{e}_n&=&F(t,x)-\bar{f}(t,x)\cdot 1_N+\tilde{u}(t), \nonumber\\
\end{eqnarray}
where $\bar{f}(t,x)=1/N\sum_{j=1}^N f(t,x^{(j)})$ and with 
$
\tilde{u}(t)=-l\sum_{k=1}^n L_ke_k(t),
$
where $L_k$, with $k=1,\dots,n$, are given in the theorem statement.
Remembering the definition of matrix $M_1$ in \eqref{eq:matrix_M_n_k} with $k=n-1$, we can also rewrite it in the block form
\begin{equation*}
M_{1}=\left[
\begin{array}{cc}
M_{\vartheta} & M_{\varsigma}\\
M^T_{\varsigma} & L_n
\end{array}
\right],
\end{equation*}
with $M_{\vartheta}\in\R^{(n-1)N\times (n-1)N}$ leading principal submatrix. For the error system \eqref{eq:stack_error_form} we can finally consider the quadratic candidate Lyapunov function\footnote{The explicit dependence on $n$ of the Lyapunov function points out that the matrix $M_1$, {from which} $\mathop M\limits^{\sim}\in\R^{nN\times nN}$ is derived, has a specific structure depending on the system order $n$ considered.} $V(e,n)=1/2 e^T \mathop M\limits^{\sim}e$, where $\mathop M\limits^{\sim}\in\R^{nN\times nN}$ is defined from $M_1$ by considering as leading principal submatrix $lM_{\vartheta}$, while all the other submatrices are the same as in $M_1$, i.e.,
\begin{equation}\label{eq:M_tilde}
\mathop M\limits^{\sim}=\left[
\begin{array}{cc}
lM_{\vartheta} & M_{\varsigma}\\
M^T_{\varsigma} & L_n
\end{array}
\right],
\end{equation}
It easy to see that such quadratic form is a valid candidate Lyapunov function for proving synchronization since $y^T \mathop M\limits^{\sim}y=0$ for all $y\in\Delta$, while $y^T \mathop M\limits^{\sim}y>0$ for all $y\in\Delta^\perp-\{0\}$. The first property follows immediately from the definition, while the latter can be shown partitioning the generic $y$ as $y=[y_\vartheta^T,y_\varsigma^T]^T$ and considering 
$y^T \mathop M\limits^{\sim}y=y^TM_1y+(l-1)y_{\vartheta}^TM_\vartheta y_{\vartheta}$. The positivity is so proved remembering that $M_1$ is positive definite on $\Delta^\perp-\{0\}$, as showed in Part 1, while $M_\vartheta$ is its leading principal minor and is, therefore, positive. 
Considering the time derivative of $V(e,n)$ we obtain 
\begin{equation}\label{eq:V_dot_step1}
\dot{V}(e,n)=e^T \mathop M\limits^{\sim} \dot{e}= e^T\mathop M\limits^{\sim}\Phi(t,x)+e^T\mathop M\limits^{\sim}\Xi(e),
\end{equation}
with $\Phi(t,x)=\left[0_N^T, \dots, 0_N^T, F^T(t,x)-\bar{f}(t,x)\cdot 1_N^T\right]^T$ and $\Xi(e)=\left[e_2^T, \dots,e_n^T, -\left(\sum_{k=1}^n L_k e_k(t)\right)^T \right]^T$.
We now analyze separately the two terms in \eqref{eq:V_dot_step1}. For the first one we have 
\begin{align*}
e^T\mathop M\limits^{\sim}\Phi(t,x)=&\sum_{k=1}^n e_k^TL_k\left[F(t,x)-\bar{f}(t,x)\cdot 1_N\right]\\
=&\sum_{k=1}^n \frac{1}{2}\sum_{i=1}^N\sum_{j=1}^N \\	
& l_{kij}\left[e_k^{(i)}-e_k^{(j)}\right]\left[f(t,x^{(i)})-f(t,x^{(j)})\right]\\
=&\sum_{k=1}^n \frac{1}{2}\sum_{i=1}^N\sum_{j=1}^N \\
& l_{kij}\left[x_k^{(i)}-x_k^{(j)}\right]\left[f(t,x^{(i)})-f(t,x^{(j)})\right],
\end{align*}
from which, using the weak-Lipschitz property
\begin{align}
e^T\mathop M\limits^{\sim}\Phi(t,x)\leq &
\sum_{k=1}^n \frac{1}{2}\sum_{i=1}^N\sum_{j=1}^N \\
& l_{kij} w \left[x^{(i)}-x^{(j)}\right]^T\left[x^{(i)}-x^{(j)}\right] \nonumber \\
=& \sum_{k=1}^n w \sum_{h=1}^n \frac{1}{2}\sum_{i=1}^N\sum_{j=1}^N l_{kij}\left[e_h^{(i)}-e_h^{(j)}\right]^2 \nonumber \\
=& \sum_{k=1}^n w \sum_{h=1}^n e_h^T L_k e_h= \sum_{k=1}^n w e^T (I_n \otimes L_k) e \nonumber \\
=& w e^T \left(I_n \otimes  \sum_{k=1}^n  L_k \right) e = w e^T \left(I_n \otimes \bar{L}\right) e.\label{eq:inequality_weak_Lipschitz_global_dynamics}
\end{align}

For the analysis of the second term in \eqref{eq:V_dot_step1}, we first write matrix $H_1$ in a  block form analogous to $M_1$, namely
\begin{equation*}
H_{1}=\left[
\begin{array}{cc}
H_{\vartheta} & H_{\varsigma}\\
H^T_{\varsigma} & L_n^2-L_{n-1}
\end{array}
\right].
\end{equation*}

From the above matrix we define $\mathop H\limits^{\sim}\in\R^{nN\times nN}$ as 
\begin{equation*}
\mathop H\limits^{\sim}=\left[
\begin{array}{cc}
lH_{\vartheta} & H_{\varsigma}\\
H^T_{\varsigma} & lL_n^2-L_{n-1}
\end{array}
\right].
\end{equation*}

Now, performing suitable algebraic manipulations, we can show that 
\begin{equation}\label{eq:equality_M_tilde_H_tilde}
e^T\mathop M\limits^{\sim}\Xi(e)=-e^T \mathop H\limits^{\sim} e.
\end{equation}

To do so, we take advantage of the recursive structure of the matrices $M_1$ and $H_1$, respectively obtained nesting  \eqref{eq:matrix_M_n_k} and \eqref{eq:matrix_H_n_k} up to index $k=n-1$. Remembering that $L_{n+1}=1/2 \cdot I_N$ and $L_0=O_N$, we have that $\mathop M\limits^{\sim}={\mathop M\limits^{\sim}}_1$, with ${\mathop M\limits^{\sim}}_1$ defined nesting up to $k=n-1$ the following 

\begin{equation*}   
{\mathop M\limits^{\sim}}_{n-k}=\left[
\begin{array}{cc}
{\mathop M\limits^{\sim}}_{\varphi,n-k} & {\mathop M\limits^{\sim}}_{\psi,n-k}\\
{\mathop M\limits^{\sim}}^T_{\psi,n-k} & {\mathop M\limits^{\sim}}_{n-k+1}
\end{array}
\right],
\end{equation*}
with ${\mathop M\limits^{\sim}}_{\varphi,n-k}=2lL_{n-k}L_{n-k+1}$ and ${\mathop M\limits^{\sim}}_{\psi,n-k}=\left[2lL_{n-k}L_{n-k+2}, \dots, 2lL_{n-k}L_{n}, L_{n-k}\right]$, and where as terminal condition of the recursion we define ${\mathop M\limits^{\sim}}_n=L_n$. 

Analogously, we have $\mathop H\limits^{\sim}={\mathop H\limits^{\sim}}_1$, with ${\mathop H\limits^{\sim}}_1$ defined nesting up to $k=n-1$ the following 
\begin{equation*}   
{\mathop H\limits^{\sim}}_{n-k}=\left[
\begin{array}{cc}
{\mathop H\limits^{\sim}}_{\varphi,n-k} & {\mathop H\limits^{\sim}}_{\psi,n-k}\\
{\mathop H\limits^{\sim}}^T_{\psi,n-k} & {\mathop H\limits^{\sim}}_{n-k+1}
\end{array}
\right],
\end{equation*}
with ${\mathop H\limits^{\sim}}_{\varphi,n-k}=lL_{n-k}^2-2lL_{n-k-1}L_{n-k+1}$, ${\mathop H\limits^{\sim}}_{\psi,n-k}=\left[-lL_{n-k-1}L_{n-k+2}, \dots, -lL_{n-k-1}L_{n},-\frac{1}{2}L_{n-k-1}\right]$, and with  ${\mathop H\limits^{\sim}}_n=lL_n^2-L_{n-1}$.

Relation \eqref{eq:equality_M_tilde_H_tilde} can be proved focusing on a generic trail principal submatrix ${\mathop H\limits^{\sim}}_{n-k}$ of ${\mathop H\limits^{\sim}}_1$. In particular, we restrict our attention on the first row and column of submatrix ${\mathop H\limits^{\sim}}_{n-k}$. The associated terms  will be involved in the bilinear terms $e_i^T\eta_{ij}e_j$ with $i=n-k$ and $j=n-k, \dots, n$ and with $i=n-k, \dots, n$ and $j=n-k$, where with $\eta_{ij}$ we have here denoted the ${i,j}$-th entry of matrix ${\mathop H\limits^{\sim}}_1$, i.e. ${\mathop H\limits^{\sim}}_1=[\eta_{ij}]$.

From the definition of matrix ${\mathop H\limits^{\sim}}_1$, it is easy to see that the terms in  $e^T{\mathop H\limits^{\sim}}_1\Xi(e)$ corresponding to the bilinear terms $e_i^T\eta_{ij}e_j$ considered, are given by 
\begin{align*}
-\sum_{{i=n-k,j=n-k, \dots, n }\atop{i=n-k, \dots, n,j=n-k}} {e_i^T\eta_{ij}e_j}  &=
2le_{n-k}^{T}L_{n-k-1}L_{n-k+1}e_{n-k}^{T}+ \\
& \sum_{j=n-k+1}^{n}2le_{n-k}^{T}L_{n-k}L_je_j-\\
& \sum_{j=n-k}^{n}le_{n-k}^{T}L_{n-k}L_je_j-\\
& \sum_{j=n-k+1}^{n}le_{i}^{T}L_iL_{n-k}e_{n-k}+ \\
& \sum_{i=n-k+1}^{n-1}2le_i^TL_{n-k-1}L_{i+1}e_{n-k}+ \\
& e_n^TL_{n-k-1}e_{n-k},
\end{align*}
from which we obtain
\begin{align*}
-\sum_{{i=n-k,j=n-k, \dots, n }\atop{i=n-k, \dots, n,j=n-k}} {e_i^T\eta_{ij}e_j} &=
2e_{n-k}^{T}L_{n-k-1}L_{n-k+1}e_{n-k}^{T}-\\
&e_{n-k}^TL_{n-k}^2e_{n-k}+\\
& \frac{1}{2}\sum_{i=n-k+1}^{n-1}\left[e_i^TL_{n-k-1}L_{i+1}e_{n-k}+\right. \\
& \left. e_{n-k}^TL_{n-k-1}L_{i+1}e_i\right]+\\
&\frac{1}{2}e_n^TL_{n-k-1}e_{n-k}+\\
&\frac{1}{2}e_{n-k}^TL_{n-k-1}e_n.
\end{align*}
 
Repeating the same reasoning for all $k\in\{0,\dots, n-1\}$ we finally have \eqref{eq:equality_M_tilde_H_tilde}.
Writing $e=[e_\vartheta^T,e_\varsigma^T]^T$ we have 
$e^T \mathop H\limits^{\sim} e=e^TH_1e+(l-1)e_\vartheta^T H_\vartheta e_\vartheta+(l-1)e_\varsigma^T L_n^2e_\varsigma$ and so, remembering \eqref{eq:inequality_N_1}, the following inequality holds
\begin{equation}\label{eq:inequality_N_tilde}
e^T \mathop H\limits^{\sim}e \geq \bar{\beta}e^Te+(l-1)\tilde{\beta}e^Te, \quad \forall e\in\Delta^{\perp}-\{0\}.
\end{equation}

Combining \eqref{eq:inequality_weak_Lipschitz_global_dynamics} and \eqref{eq:inequality_N_tilde}, from \eqref{eq:V_dot_step1} the following inequality holds
\begin{align*}
\dot{V}(e,n)&\leq w e^T \left(I_n \otimes \bar{L}\right) e-\bar{\beta}e^Te-(l-1)\tilde{\beta}e^Te\\
&\leq w\bar{\lambda}_{\max}e^Te-\bar{\beta}e^Te-(l-1)\tilde{\beta}e^Te. 
\end{align*}

Imposing $w\bar{\lambda}_{\max}-\bar{\beta}-(l-1)\tilde{\beta}<0$ condition \eqref{eq:inequality_l} is obtained which guarantees, together with $l>1$, a negative quadratic upper bound for $\dot{V}(e,n)$ and so the synchronization of the agents to the same trajectory. 
\end{proof}

\begin{rem}
It is worth noticing that the relevant case of consensus of double \cite{re:08,lixi:13} and higher-order \cite{yuch:11} integrators is included in the previous study as a particular case when $f(t,x^{(i)})=0$, and can be studied following exactly the same way of constructing the quadratic Lyapunov function $V(e,2)=1/2 e^T \mathop M\limits^{\sim}e$, with $\mathop M\limits^{\sim}$ given in \eqref{eq:M_tilde}. Specifically, for the consensus of double integrators, the matrix $\mathop M\limits^{\sim}$ can be easily showed to be given by
\begin{equation*}   
\left[
\begin{array}{cc}
2lL_1L_2 & L_1\\
L_1 & L_2
\end{array}
\right].
\end{equation*}
Notice also that, in our study, we directly consider in Algorithm \ref{alg:spectral_constranit_centralized_info} and in Algorithm \ref{alg:spectral_constranit_decentralized_info} at least a second-order degree, i.e., $n \geq 2$, for the interconnected agents. 
In principle, a first order case could still be studied observing that the $n\times n$ block matrix $M_1$, and so also matrix $\mathop M\limits^{\sim}$,  grows in size accordingly to the degree $n$ of the agents from the bottom-right corner $L_n$, thus resulting in the specific recursive structure we highlighted. The case of $n=1$ would so result in the bottom-right corner only, thus having $\mathop M\limits^{\sim}=lL$, which gives a well known Lyapunov function for studying the classical problem of consensus for single integrators \cite{olfa:07}. 
{Also, when considering the higher-order consensus problem and a $(\mathcal{G},n)$-collection is chosen, the controller shows an analogous structure to the one in \cite{yuch:11}. However, in that paper, a different criterion based on the Kharitonov' s theorem is provided in order to select the feedback coefficients $l_{n-k}$.}

Furthermore, together with the case of consensus of integrators,  the relevant case of synchronization of linear systems can be also addressed  with our framework, as will be shown later in Corollary \ref{thm:synch_controllable_linear_systems}.
\end{rem}

\subsection{Synchronization with $PI^hD^{n-1}$ controllers}\label{sec:PI_controller}
The analysis conducted in Section \ref{sec:P_controller}, where a state proportional control action is used to achieve free synchronization, is now extended to the case where an integral control action of any arbitrary degree $h\geq 1$, with $h\in\Bbb{N}$ is also considered. 
 
More in detail, considering a generic integrable function $\eta(\cdot): \R\mapsto\R^n$, we define its integral of degree $h\in\Bbb{N}$ with the following notation
\begin{equation*}
\int_{0}^{t,(h)}\eta(\tau) d\tau := \int_{0}^t \int_{0}^{\tau,(h-1)}\eta(\tau^\prime) d\tau^\prime d\tau \qquad \mathrm{if}\,\, h>1,
\end{equation*}
while in case $h=1$, we simply have
\begin{equation*}
\int_{0}^{t,(1)}\eta(\tau) d\tau := \int_{0}^t \eta(\tau)  d\tau.
\end{equation*}
We now give the following theorem.

\begin{thm}\label{thm:PI_controller_companion_form}
Let us consider $N$ dynamical agents in companion form and suppose that   $f(t,x^{(i)})$ is weak-Lipschitz with constant $w$.  Then, the free synchronization problem is solvable with a $PI^hD^{n-1}$ controller of arbitrary degree $h\geq 1$ of the form
\begin{align*}
\tilde{u}^{(i)}(t)=&l\sum_{k=1}^n\sum_{j=1}^N l_{PD, kij}(x_k^{(j)}(t)-x_k^{(i)}(t)) +\\
& l\sum_{m=1}^h\sum_{j=1}^N l_{I, mij}\int_0^{t,(m)}(x_1^{(j)}(\tau)-x_1^{(i)}(\tau)) d \tau,
\end{align*}
with  $i=1,\dots,N$.
Furthermore, the gain $l$ and the matrices $L_{PD,k}=[l_{PD, kij}]$, with $k=1,\dots, n$, and $L_{I,m}=[l_{I, m ij}]$, with $m=1,\dots, h$, can be selected analogously to Theorem \ref{thm:P_controller_companion_form} considering the following position 
\begin{align}
&L_{I, h-\vartheta +1}  = L_\vartheta, & \vartheta=1,\dots, h \label{eq:L_I_ridefine}\\
&L_{PD, \vartheta -h }  = L_\vartheta ,  & \vartheta=h+1,\dots, h+n\label{eq:L_P_ridefine},
\end{align}
with $\{L_1,\dots,L_{h+n}\}$ being a $(N,h+n)${\em -collection} (or a $(\mathcal{G},h+n)${\em -collection} with $\mathcal{G}$ any connected graph).
\end{thm}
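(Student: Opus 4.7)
The plan is to reduce Theorem~\ref{thm:PI_controller_companion_form} to Theorem~\ref{thm:P_controller_companion_form} by a state-space augmentation that absorbs the $h$ iterated integral actions into additional companion-form coordinates, so that the closed loop becomes an order $(n+h)$ system in companion form driven by a purely proportional-derivative distributed controller.

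Concretely, for each agent $i$ I would introduce the iterated-integral variables $\chi_m^{(i)}(t):=\int_0^{t,(m)} x_1^{(i)}(\tau)\,d\tau$ for $m=1,\dots,h$ and form the augmented stack $z^{(i)}:=[\chi_h^{(i)},\chi_{h-1}^{(i)},\dots,\chi_1^{(i)},x_1^{(i)},\dots,x_n^{(i)}]^{T}\in\R^{n+h}$. A direct differentiation gives $\dot{z}_\vartheta^{(i)}=z_{\vartheta+1}^{(i)}$ for $\vartheta=1,\dots,n+h-1$, while $\dot{z}_{n+h}^{(i)}=\tilde{f}(t,z^{(i)})+\tilde{g}(t,z^{(i)})u^{(i)}$, with $\tilde{f}(t,z):=f(t,[z_{h+1},\dots,z_{h+n}]^{T})$ and $\tilde{g}$ analogous. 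The augmented system is therefore itself in canonical control form of order $n+h$. Moreover, since $z_\vartheta=\chi_{h-\vartheta+1}$ for $\vartheta\leq h$ and $z_\vartheta=x_{\vartheta-h}$ for $\vartheta\geq h+1$, the $PI^hD^{n-1}$ control law collapses into
\begin{equation*}
\tilde{u}^{(i)}(t)=l\sum_{\vartheta=1}^{n+h}\sum_{j=1}^{N} l_{\vartheta,ij}\bigl(z_\vartheta^{(j)}(t)-z_\vartheta^{(i)}(t)\bigr),
\end{equation*}
where the relabeling \eqref{eq:L_I_ridefine}--\eqref{eq:L_P_ridefine} identifies $L_{I,1},\dots,L_{I,h}$ with $L_h,\dots,L_1$ and $L_{PD,1},\dots,L_{PD,n}$ with $L_{h+1},\dots,L_{h+n}$ of the given $(N,n+h)${\em -collection}. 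Thus the closed loop on $z^{(i)}$ is exactly the $PD^{n+h-1}$ setting of Theorem~\ref{thm:P_controller_companion_form}.

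Invoking Theorem~\ref{thm:P_controller_companion_form} on the augmented system with the collection $\{L_1,\dots,L_{n+h}\}$ and a gain $l$ satisfying the analogue of \eqref{eq:inequality_l} with $\bar\beta$, $\tilde\beta$ and $\bar\lambda_{\max}$ recomputed on $\{L_1,\dots,L_{n+h}\}$ yields $\|z^{(i)}(t)-z^{(j)}(t)\|\to 0$ and $\|\tilde{u}^{(i)}(t)\|\to 0$ as $t\to\infty$. Since $x^{(i)}$ is a sub-vector of $z^{(i)}$, the first convergence implies \eqref{eq:synchronization_difference}, while $\|\tilde{u}^{(i)}\|\to 0$ together with $g(t,x^{(i)})\neq 0$ yields \eqref{eq:synchronization_input}, concluding the argument.

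The hard step, on which I expect the bulk of the technical work to fall, is verifying that the augmented drift $\tilde{f}$ satisfies the weak-Lipschitz hypothesis required by Theorem~\ref{thm:P_controller_companion_form} on all of $\R^{n+h}$, not merely on the subspace of the original coordinates. For indices $\vartheta\in\{h+1,\dots,n+h\}$ the property is inherited directly from the hypothesis on $f$, since $(z_\vartheta-\bar{z}_\vartheta)=(x_{\vartheta-h}-\bar{x}_{\vartheta-h})$ and $\|x-\bar{x}\|\leq\|z-\bar{z}\|$. For the integral indices $\vartheta\leq h$, where $\tilde{f}$ is independent of $z_\vartheta$ while $z_\vartheta$ itself may be unbounded, the bound is not automatic; I expect to handle it by applying a Young-type inequality to $(z_\vartheta-\bar{z}_\vartheta)[\tilde{f}(t,z)-\tilde{f}(t,\bar{z})]$ and then using a Lipschitz strengthening of the hypothesis (consistent with Lemma~\ref{lem:weak_Lipschitz}) to bound $[f(t,x)-f(t,\bar{x})]^2$ by a multiple of $\|z-\bar{z}\|^2$. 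The resulting uniform weak-Lipschitz constant of $\tilde{f}$ then enters the augmented version of \eqref{eq:inequality_l} as the effective $w$.
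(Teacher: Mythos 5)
Your proposal follows essentially the same route as the paper's appendix proof: the paper likewise defines the augmented variables $z_\vartheta=\int_0^{t,(h-\vartheta+1)}x_1(\tau)\,d\tau$ for $\vartheta=1,\dots,h$ and $z_\vartheta=x_{\vartheta-h}$ for $\vartheta=h+1,\dots,h+n$, observes that the closed loop is an order-$(n+h)$ companion-form system with distributed coupling $-l\sum_{\vartheta=1}^{n+h}L_\vartheta z_\vartheta$ under the relabeling \eqref{eq:L_I_ridefine}--\eqref{eq:L_P_ridefine}, and invokes Theorem \ref{thm:P_controller_companion_form} on the augmented system, exactly as you do.

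The one point where you go beyond the paper is the step you flag as hard, and you are right to flag it: the paper's proof is a single reduction with no verification that the augmented drift $\tilde f(t,z)=f(t,z_{h+1},\dots,z_{h+n})$ satisfies the weak-Lipschitz hypothesis of Theorem \ref{thm:P_controller_companion_form} with respect to \emph{all} $n+h$ coordinates, which is what the estimate \eqref{eq:inequality_weak_Lipschitz_global_dynamics} consumes, since the integral coordinates $\vartheta\le h$ enter through the nonzero matrices $L_1,\dots,L_h$. Your suspicion that this is not automatic is correct, and in fact under the weak-Lipschitz hypothesis alone the step can fail: for $n=1$, $f(x)=-x^3$ is weak-Lipschitz (one-sided) but, taking $y=(0,0)$ and $z=(K,-K)$ in the augmented coordinates, $(z_1-y_1)\bigl[\tilde f(z)-\tilde f(y)\bigr]=K^4$ while $\|z-y\|^2=2K^2$, so no constant $w$ works. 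Under the global Lipschitz strengthening you invoke (consistent with Lemma \ref{lem:weak_Lipschitz}), your Young-type estimate does close the gap with the same constant, since $(z_\vartheta-y_\vartheta)\bigl[\tilde f(t,z)-\tilde f(t,y)\bigr]\le \frac{w}{2}\bigl(|z_\vartheta-y_\vartheta|^2+\|x-\bar x\|^2\bigr)\le w\|z-y\|^2$ for the integral indices, while the remaining indices inherit the property directly as you note. So your final paragraph is not mere caution: it identifies, and (modulo the strengthened hypothesis) repairs, a step that the paper's one-line proof leaves unaddressed.
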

\begin{proof}
The proof of Theorem \ref{thm:PI_controller_companion_form} is given in the Appendix.
\end{proof}

\begin{rem}
The previous result extends Theorem \ref{thm:P_controller_companion_form} allowing an additional integral control action of any degree. {The benefits of integral control actions in disturbance rejection are well known in the literature. Therefore, such additional degree of freedom can be usefully exploited for this aim, as shown in the numerical examples section.}
\end{rem}

\section{Synchronization under canonical transformation}\label{sec:synch_canonical_transformation}
The results stated in Section \ref{sec:synch_canonical_form} can be extended to the relevant class of dynamical systems admitting a canonical control transformation. Roughly speaking, for general nonlinear systems, it suffices to find a nonlinear state transformation $z(t)=T(x(t))$ and apply the $PI^hD^{n-1}$ control law of Theorem \ref{thm:PI_controller_companion_form} to such transformed state. The computation of this nonlinear transformation under suitable involutivity condition of the nonlinear vector field is a well known result in nonlinear control and can be found in \cite{slli:91}.  Also, when the special case of linear systems is considered, the canonical control transformation can be found in \cite{lu:79} and represents a fundamental result in control theory. 

In this section, we first analyse the general case of nonlinear systems, and later the case of linear systems as a separate result. Notice that for the sake of simplicity in the notation, we will consider only time-independent systems. However, analogous results hold for the case of time-dependent systems.

\begin{thm}\label{thm:synch_controllable_nonlinear_systems}
Let us consider a connected graph $\mathcal{G}$ and a multi-agent system of nonlinear dynamical agents of the form
\begin{equation}\label{eq:nonlinear_agent}
\dot{x}^{(i)}=f(x^{(i)})+g(x^{(i)})u^{(i)},\qquad i=1,\dots,N,
\end{equation}
with $x^{(i)}\in\R^n$ and $u^{(i)}\in\R$. Suppose that, for all $x^{(i)}\in\R^{n}$, the following conditions hold:
\begin{enumerate}
\item [(i)] The vectors $\left\{g, \mathrm{ad}_fg,\dots, \mathrm{ad}_f^{n-1}g \right\}$ are linearly independent;
\item [(ii)] The set $\left\{g, \mathrm{ad}_fg,\dots, \mathrm{ad}_f^{n-2}g \right\}$ is involutive;
\item[(iii)] The function $\mathcal{L}_f^{n}\left(T^{-1}(\xi)\right)$, with $\xi\in\R^n$, is weak-Lipschitz with constant $w$,
\end{enumerate}
where $T(\cdot):\R^n\mapsto\R^n$ is a suitable diffeomorphism.
Then, the free synchronization problem for the multi-agent system is solvable with distributed $PI^hD^{n-1}$ controllers, with $h\geq 0$, of the form\footnote{With a slight abuse of notation with $h=0$ we mean here a pure proportional action.}
\begin{align*}
u^{(i)}(t)=&\frac{1}{\mathcal{L}_g\mathcal{L}_f^{n-1}(x^{(i)})}l\sum_{k=1}^n\sum_{j=1}^N l_{PD, kij}\left[T_k\left(x^{(j)}(t)\right)-\right. \\
&\left. T_k\left(x^{(i)}(t)\right)\right] + \\
& l\sum_{m=1}^h\sum_{j=1}^N l_{I, mij}\int_0^{t,(m)}\left[T_1\left(x^{(j)}(\tau)\right)-\right. \\
& \left. T_1\left(x^{(i)}(\tau)\right)\right] d \tau,
\end{align*} 
for all  $i=1,\dots,N$, and with the gains $l, l_{PD, kij}, l_{I, mij}$ selected according to Theorem \ref{thm:PI_controller_companion_form} and with $T_k(\cdot)$ being the $k$-th element of $T(\cdot)$.
\end{thm}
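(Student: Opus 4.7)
The plan is to reduce Theorem \ref{thm:synch_controllable_nonlinear_systems} to the previously established Theorem \ref{thm:PI_controller_companion_form} via a change of coordinates combined with partial feedback linearization. First, I would invoke the classical feedback-linearizability result from \cite{slli:91}: under conditions (i) and (ii), there exists a diffeomorphism $T:\R^n\mapsto\R^n$ such that the coordinate change $z^{(i)}=T(x^{(i)})$ puts each agent into companion form
\begin{align*}
\dot{z}_k^{(i)}&=z_{k+1}^{(i)},\qquad k=1,\dots,n-1,\\
\dot{z}_n^{(i)}&=\mathcal{L}_f^{n}\bigl(T^{-1}(z^{(i)})\bigr)+\mathcal{L}_g\mathcal{L}_f^{n-1}\bigl(T^{-1}(z^{(i)})\bigr)\,u^{(i)}.
\end{align*}
Moreover, the component $T_k(x)$ coincides with $\mathcal{L}_f^{k-1}T_1(x)$, so condition (i) guarantees $\mathcal{L}_g\mathcal{L}_f^{n-1}(x)\neq 0$.

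Second, I would introduce the partial feedback transformation $\tilde{u}^{(i)}(t):=\mathcal{L}_g\mathcal{L}_f^{n-1}(x^{(i)}(t))\cdot u^{(i)}(t)$, which is exactly the gain factor appearing in the stated controller. After this substitution the $z$-dynamics take the form $\dot{z}_n^{(i)}=\tilde{f}(z^{(i)})+\tilde{u}^{(i)}$ with $\tilde{f}(\xi):=\mathcal{L}_f^{n}(T^{-1}(\xi))$. By hypothesis (iii), $\tilde{f}$ is weak-Lipschitz with constant $w$, so the transformed multi-agent system matches \eqref{eq:canonical_control_form} and satisfies the hypotheses of Theorem \ref{thm:PI_controller_companion_form} verbatim.

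Third, I would observe that, since $z_k^{(i)}=T_k(x^{(i)})$, the distributed control law stated in the theorem is, in the $z$-variables, precisely the $PI^hD^{n-1}$ law of Theorem \ref{thm:PI_controller_companion_form}. Selecting $l$, $\{L_{PD,k}\}$ and $\{L_{I,m}\}$ according to Theorem \ref{thm:PI_controller_companion_form} via a $(\mathcal{G},h+n)$-collection and identifications \eqref{eq:L_I_ridefine}–\eqref{eq:L_P_ridefine}, I conclude that $\lim_{t\to\infty}\|z^{(i)}(t)-z^{(j)}(t)\|=0$ and $\lim_{t\to\infty}|\tilde{u}^{(i)}(t)|=0$. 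Translating back, continuity of the diffeomorphism $T^{-1}$ on the compact set enclosing the synchronous trajectory yields $\|x^{(i)}(t)-x^{(j)}(t)\|\to 0$, and the non-vanishing of $\mathcal{L}_g\mathcal{L}_f^{n-1}(x^{(i)})$, together with its continuity, gives $u^{(i)}=\tilde{u}^{(i)}/\mathcal{L}_g\mathcal{L}_f^{n-1}(x^{(i)})\to 0$, establishing both conditions \eqref{eq:synchronization_difference}–\eqref{eq:synchronization_input}.

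The main obstacle will be the rigorous treatment of the inverse transformation in the last step: condition (iii) is stated globally in $\xi$, but to translate asymptotic convergence in the $z$ coordinates back to convergence in the $x$ coordinates, and to conclude $u^{(i)}\to 0$, one must ensure that trajectories stay within a region where $T^{-1}$ is uniformly continuous and $\mathcal{L}_g\mathcal{L}_f^{n-1}$ is uniformly bounded away from zero. As already remarked after Lemma \ref{lem:weak_Lipschitz}, the natural route is to argue attractiveness of the synchronous manifold within a compact positively invariant set, on which both properties follow from smoothness; once this is in place, the remainder of the argument is just a change-of-coordinates restatement of Theorem \ref{thm:PI_controller_companion_form}.
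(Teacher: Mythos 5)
Your proposal is correct and follows essentially the same route as the paper's proof: use conditions (i)--(ii) to obtain the diffeomorphism $T$ and the nonvanishing of $\mathcal{L}_g\mathcal{L}_f^{n-1}$, transform to companion form, invoke Theorem \ref{thm:PI_controller_companion_form} via condition (iii), and translate synchronization back through $T^{-1}$. You are in fact slightly more careful than the paper at the final step --- the paper simply appeals to smooth invertibility of $T$, whereas you correctly note that passing from $\|z^{(i)}-z^{(j)}\|\to 0$ to $\|x^{(i)}-x^{(j)}\|\to 0$ requires uniform continuity of $T^{-1}$ on a region containing the trajectories, consistent with the compact-invariant-set remark following Lemma \ref{lem:weak_Lipschitz}.
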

\begin{proof}
The proof of Theorem \ref{thm:synch_controllable_nonlinear_systems} is given in the Appendix.
\end{proof}

\begin{cor}\label{thm:synch_controllable_linear_systems}
Let us consider a connected graph $\mathcal{G}$ and a multi-agent system of linear dynamical agents of the form
$
\dot{x}^{(i)}=Ax^{(i)}+bu^{(i)}, 
$
with $x^{(i)}\in\R^n$ and $u^{(i)}\in \R$. If the pair $(A,b)$ is controllable, then there exists a full rank matrix $T$ such that the free synchronization problem for the multi-agent system is solvable with distributed $PI^hD^{n-1}$ controllers of the form
\begin{align*}
u^{(i)}(t)= & l\sum_{k=1}^n\sum_{j=1}^N l_{PD, kij}\left[T_k x^{(j)}(t)-T_k x^{(i)}(t)\right] +\\
& l\sum_{m=1}^h\sum_{j=1}^N l_{I, mij}\int_0^{t,(m)}\left[T_1 x^{(j)}(\tau) -T_1 x^{(i)}(\tau)\right] d \tau, 
\end{align*}
with the gains $l, l_{PD, kij}, l_{I, mij}$ selected according to Theorem \ref{thm:PI_controller_companion_form} and with $T_k$ being the $k$-th row of matrix $T$.
\end{cor}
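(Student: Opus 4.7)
The plan is to deduce this corollary as a direct specialization of Theorem \ref{thm:synch_controllable_nonlinear_systems} to the linear vector fields $f(x)=Ax$ and $g(x)=b$. What has to be checked is that (i)--(iii) of that theorem reduce, in the linear case, to nothing more than controllability of $(A,b)$, and that the resulting controller collapses to the one announced here.

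First I would verify (i) and (ii) by computing the iterated Lie brackets explicitly. Since $g=b$ is a constant vector field (zero Jacobian), an easy induction on $k$ gives $\mathrm{ad}_f^k g=(-1)^k A^k b$. Therefore the linear independence of $\{g,\mathrm{ad}_f g,\dots,\mathrm{ad}_f^{n-1}g\}$ is equivalent to the linear independence of $\{b,Ab,\dots,A^{n-1}b\}$, i.e.\ the Kalman rank condition on $(A,b)$, which is guaranteed by hypothesis. For involutivity, the vector fields $\{g,\mathrm{ad}_f g,\dots,\mathrm{ad}_f^{n-2} g\}$ are all constant, so every pairwise Lie bracket $\mathrm{ad}_{f_i}f_j$ vanishes identically, and the set is involutive with trivial coefficients $\alpha_{ijk}(x)\equiv 0$.

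For (iii) and the construction of $T$, I would invoke the classical linear canonical control transformation of \cite{lu:79}, which, under controllability of $(A,b)$, produces a full-rank matrix $T\in\R^{n\times n}$ such that $z=Tx$ brings the system into companion form with $Tb=e_n$. In this setting the required diffeomorphism of Theorem \ref{thm:synch_controllable_nonlinear_systems} is simply the linear map $x\mapsto Tx$ with smooth inverse $\xi\mapsto T^{-1}\xi$. Then $\mathcal{L}_f^{n}(T^{-1}(\xi))=\bigl(T_1 A^{n}T^{-1}\bigr)\xi$ is a linear functional of $\xi$, hence globally Lipschitz, hence weak-Lipschitz by Lemma \ref{lem:weak_Lipschitz}; condition (iii) is satisfied.

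Finally I would substitute into the controller of Theorem \ref{thm:synch_controllable_nonlinear_systems}. In the linear case $T_k(x)=T_kx$ where $T_k$ denotes the $k$-th row of $T$, and the prefactor $1/\mathcal{L}_g\mathcal{L}_f^{n-1}(x^{(i)})$ becomes the reciprocal of the constant $T_1 A^{n-1}b$, which equals $1$ by the defining property $Tb=e_n$ of the canonical transformation (equivalently, absorbable into the scalar gain $l$). Plugging these in reproduces verbatim the $PI^hD^{n-1}$ controller stated in the corollary, while the gains $l,l_{PD,kij},l_{I,mij}$ are chosen according to Theorem \ref{thm:PI_controller_companion_form}. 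The only step that demands any care is checking that the canonical transformation of \cite{lu:79} does make $T_1 A^{n-1}b=1$, but this is a standard normalization for companion form and poses no real obstacle.
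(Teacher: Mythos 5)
Your proposal is correct and follows essentially the same route as the paper's own proof: reduce to Theorem \ref{thm:synch_controllable_nonlinear_systems} by checking that controllability yields condition (i), that constant vector fields trivially give involutivity (ii), and that linearity of the transformed drift gives weak-Lipschitzness (iii) via Lemma \ref{lem:weak_Lipschitz}, with the transformation $T$ supplied in closed form by \cite{lu:79}. You are in fact more explicit than the paper on two points it glosses over --- the computation $\mathrm{ad}_f^k g=(-1)^kA^kb$ identifying (i) with the Kalman rank condition, and the normalization $T_1A^{n-1}b=1$ explaining why the prefactor $1/\mathcal{L}_g\mathcal{L}_f^{n-1}$ disappears from the stated controller --- both of which check out.
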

\begin{proof}
The proof of Corollary \ref{thm:synch_controllable_linear_systems} is given in the Appendix.
\end{proof}

\begin{rem}
Notice that, from the above result the controllability hypothesis suffices to guarantee the synchronizability of the agents, as already showed in a different way in \cite{lidu:10}. However, it is worth noticing that the approach presented here naturally allows to explicitly consider integral control actions for possible disturbances rejections.  
\end{rem}

\section{Numerical example}\label{sec:numerical_examples}
In this section we show the effectiveness of our results on two numerical examples. Specifically, synchronization of nonlinear and linear oscillators with possible disturbances  will be achieved via the coupling selection illustrated in Section \ref{sec:ditributed_gain_selection}.

\subsection{Synchronization of Van der Pol oscillators}
We consider a network of ten identical Van der Pol oscillators whose model is given by the following relation 
\begin{eqnarray}\label{eq:van_der_pol_model}
\dot{x}_1^{(i)}&=&x_2^{(i)} \nonumber \\
\dot{x}_2^{(i)}&=&-x_1^{(i)}+\mu(1-|x_1^{(i)}|)x_2^{(i)}+u^{(i)}. \nonumber\\
\end{eqnarray}
For our example, we choose the parameter $\mu=2.5$ and initial conditions randomly assigned in the interval $[0,5]$ both for $x_1^{(i)}$ and $x_2^{(i)}$, for all the systems in the network. 
We validate Theorem \ref{thm:P_controller_companion_form} via creating a  connected random graph $\mathcal{G}$ which set the distributed control for the ten systems and a $(\mathcal{G},2)${\em -collection} over such graph (notice that, in this case $n=2$). 

Figure \ref{fig:van_der_pol_uncoupled_x1} shows the first state component of the networked systems when no coupling is considered, while the effect of the coupling of the assigned $(\mathcal{G},2)${\em -collection} allows the network to synchronize over a common manifold (Figure \ref{fig:van_der_pol_coupled_x1}). 
The synchronization error is depicted in Figure \ref{fig:van_der_pol_coupled_error}.

\begin{figure}[h!]
\centering
\subfigure[]{\label{fig:van_der_pol_uncoupled_x1}\includegraphics[width=0.4\textwidth]{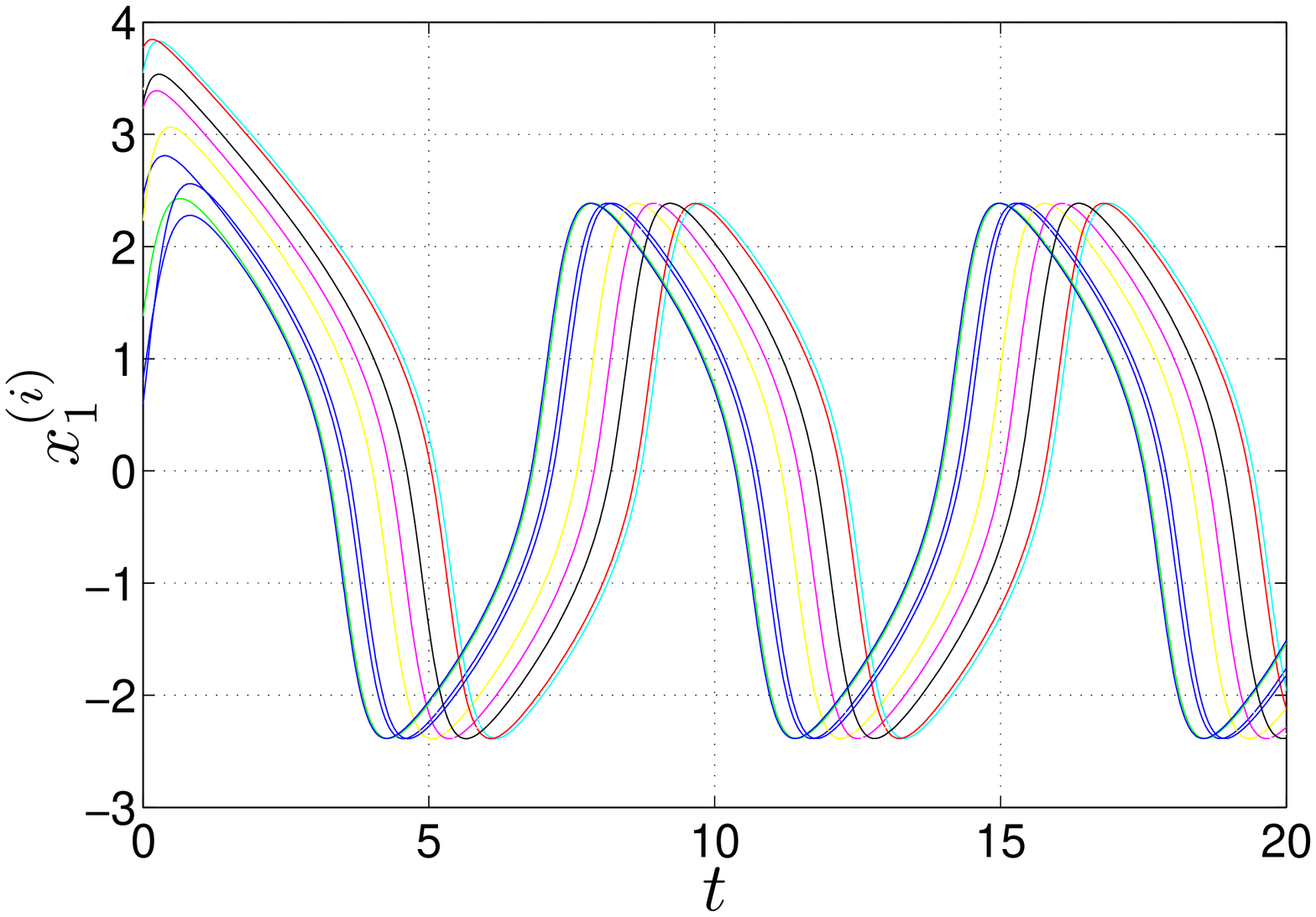}}
\subfigure[]{\label{fig:van_der_pol_coupled_x1}\includegraphics[width=0.4\textwidth]{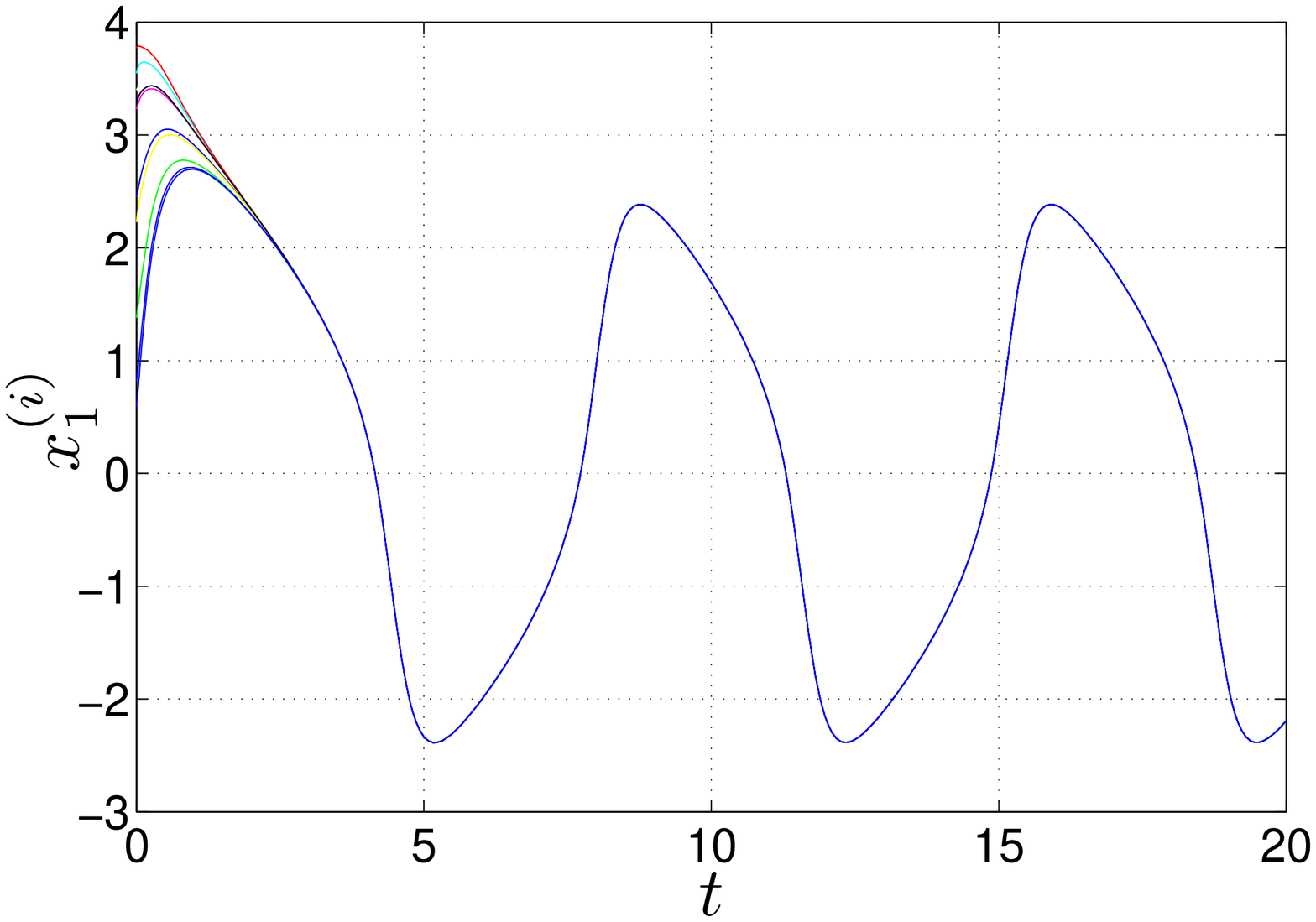}}
\caption{Time evolution of the state components $x_1^{(i)}$ for the network of Van der Pol oscillators: (a) uncoupled case; (b) coupled case.}
\label{fig:van_der_pol_x1}
\end{figure}

\begin{figure}[h!]
\centering
\includegraphics[width=0.4\textwidth]{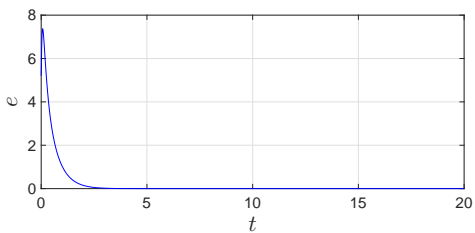}
\caption{Time evolution of the synchronization error $e$ for the network of Van der Pol oscillators.}
\label{fig:van_der_pol_coupled_error}
\end{figure}

\subsection{Synchronization of linear oscillators}
We now consider the synchronization of ten interconnected  linear oscillators
\begin{equation*}
\dot{x}^{(i)}=\left(\begin{array}{cc}
4  & 5\\
-5 & -4
\end{array}\right)x^{(i)}+
\left(\begin{array}{c}
1\\
1
\end{array}\right)u^{(i)}, 
\end{equation*}
using a $PD$ and a $PID$ controller according to Corollary \ref{thm:synch_controllable_linear_systems}.
Specifically, as done for the previous numerical example, we validate a distribute $PD$ controller via generating a connected random graph $\mathcal{G}$  for the overall system and a related $(\mathcal{G},2)${\em -collection}. 
It is easy to see that the system considered  is controllable and so, we use the distributed controller  given in Corollary \ref{thm:synch_controllable_linear_systems}, where the transformation matrix $T$ can {be shown} to be
\begin{equation*}
T=\left(\begin{array}{cc}
0.0556  & -0.0556\\
 0.5  & 0.5
\end{array}\right).
\end{equation*}
From Figure \ref{fig:linear_oscillator_nodisturbance} it is possible to see the time evolution of the first state component both for the case of uncoupled (Figure \ref{fig:linear_oscillator_nodisturbance_uncoupled_x1}) and coupled (Figure \ref{fig:linear_oscillator_nodisturbance_coupled_P_x1}) network, starting from randomly distributed initial conditions in the interval $[-10,10]$ for both the state components.

\begin{figure}[h!]
\centering
\subfigure[]{\label{fig:linear_oscillator_nodisturbance_uncoupled_x1}\includegraphics[width=0.4\textwidth]{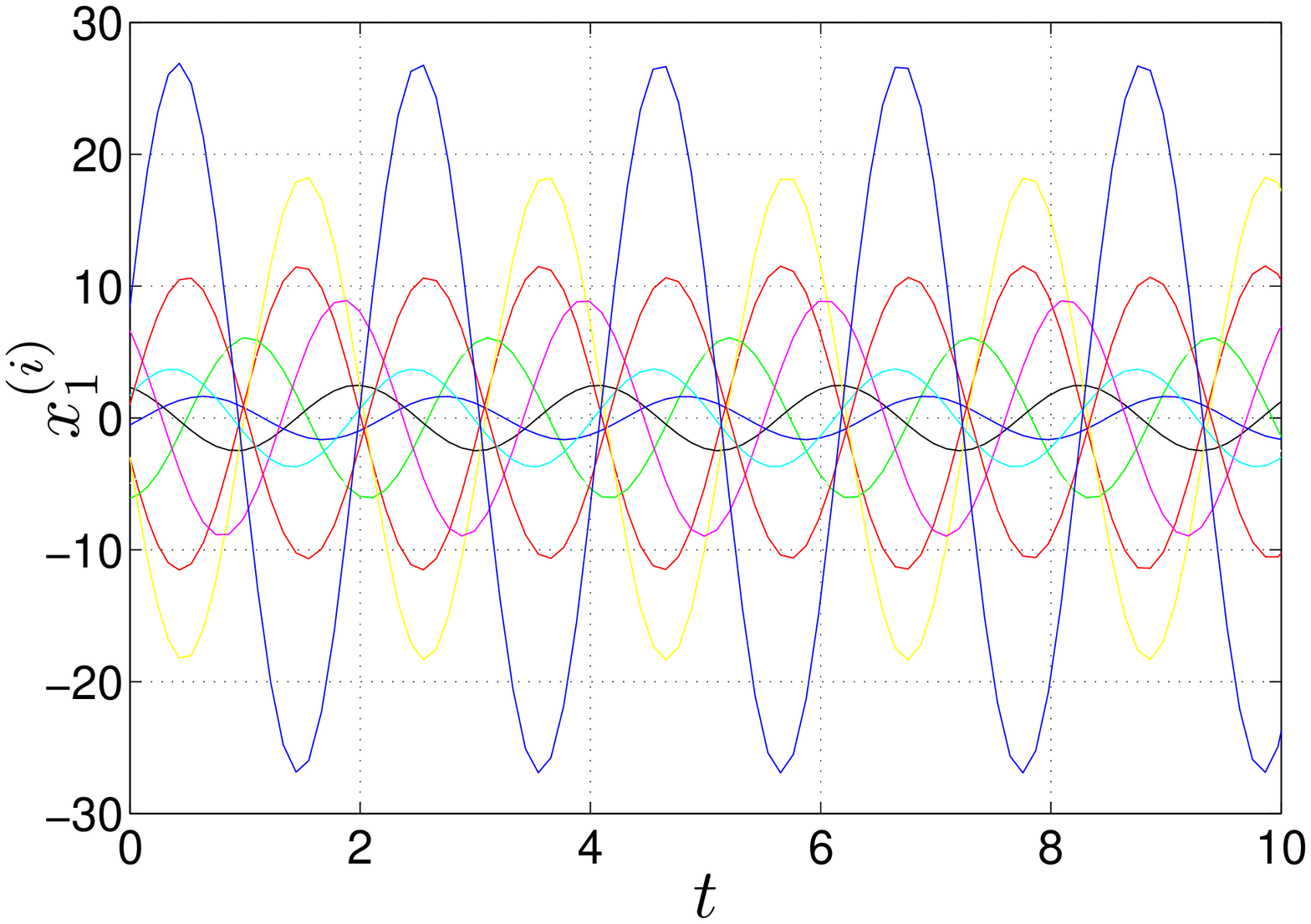}}
\subfigure[]{\label{fig:linear_oscillator_nodisturbance_coupled_P_x1}\includegraphics[width=0.4\textwidth]{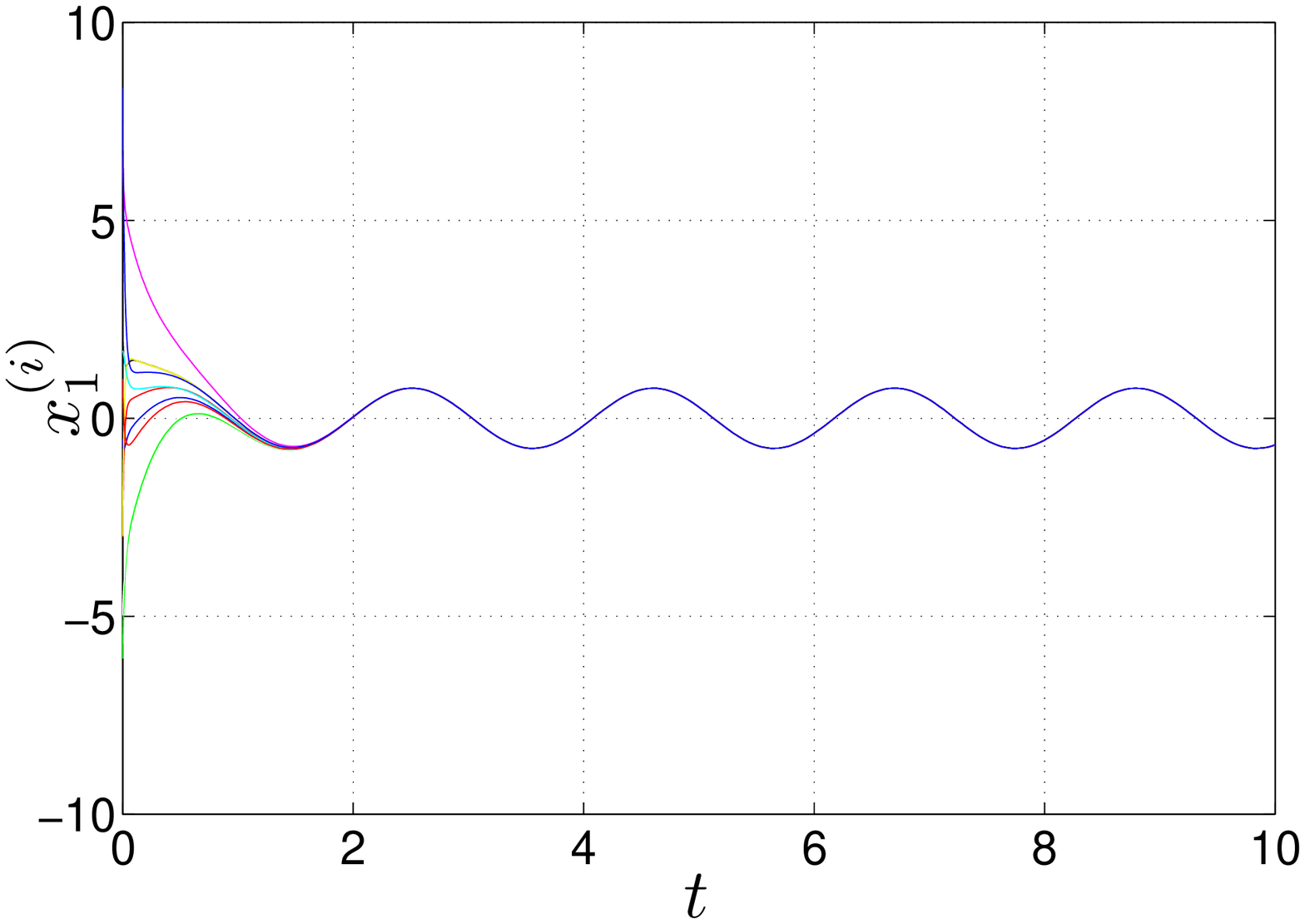}}
\caption{Time evolution of the state components $x_1^{(i)}$ for the network of linear oscillators with $PD$ controllers and no disturbances: (a) uncoupled case; (b) coupled case.}
\label{fig:linear_oscillator_nodisturbance}
\end{figure}

In order to validate the effectiveness of the distributed integral action, we add a step disturbance on a system in the network. In Figure \ref{fig:linear_oscillator_disturbance_coupled_P_x1} the first state component is again showed. It is possible to see that synchronization is no longer achieved. The residual global
synchronization error {reaches a constant value in the limit when $t\rightarrow +\infty$, which is equal to $e_\infty=5.8$} as depicted in  Figure \ref{fig:linear_oscillator_disturbance_coupled_P_error}.

\begin{figure}[h!]
\centering
\subfigure[]{\label{fig:linear_oscillator_disturbance_coupled_P_x1}\includegraphics[width=0.4\textwidth]{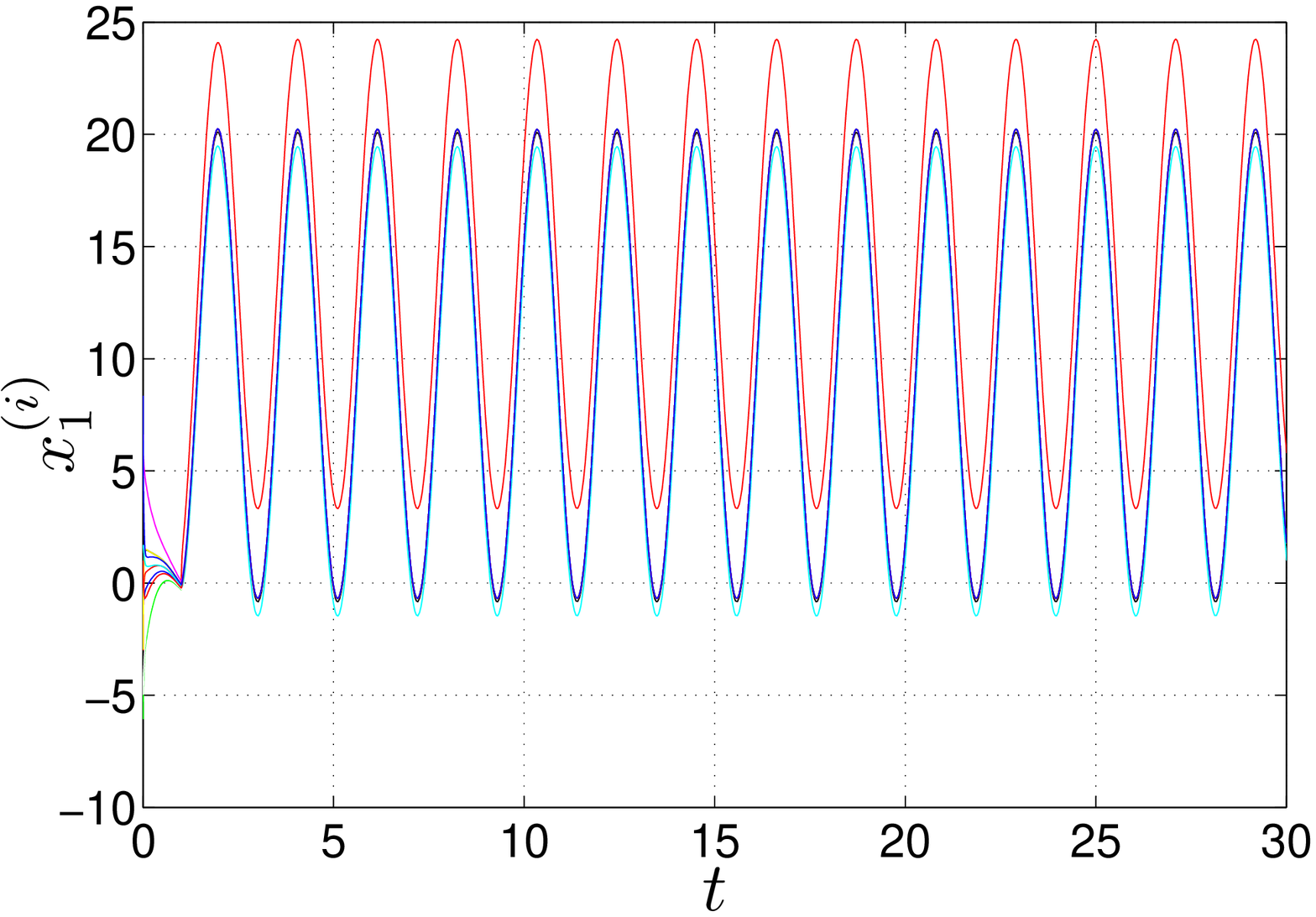}}
\subfigure[]{\label{fig:linear_oscillator_disturbance_coupled_P_error}\includegraphics[width=0.4\textwidth]{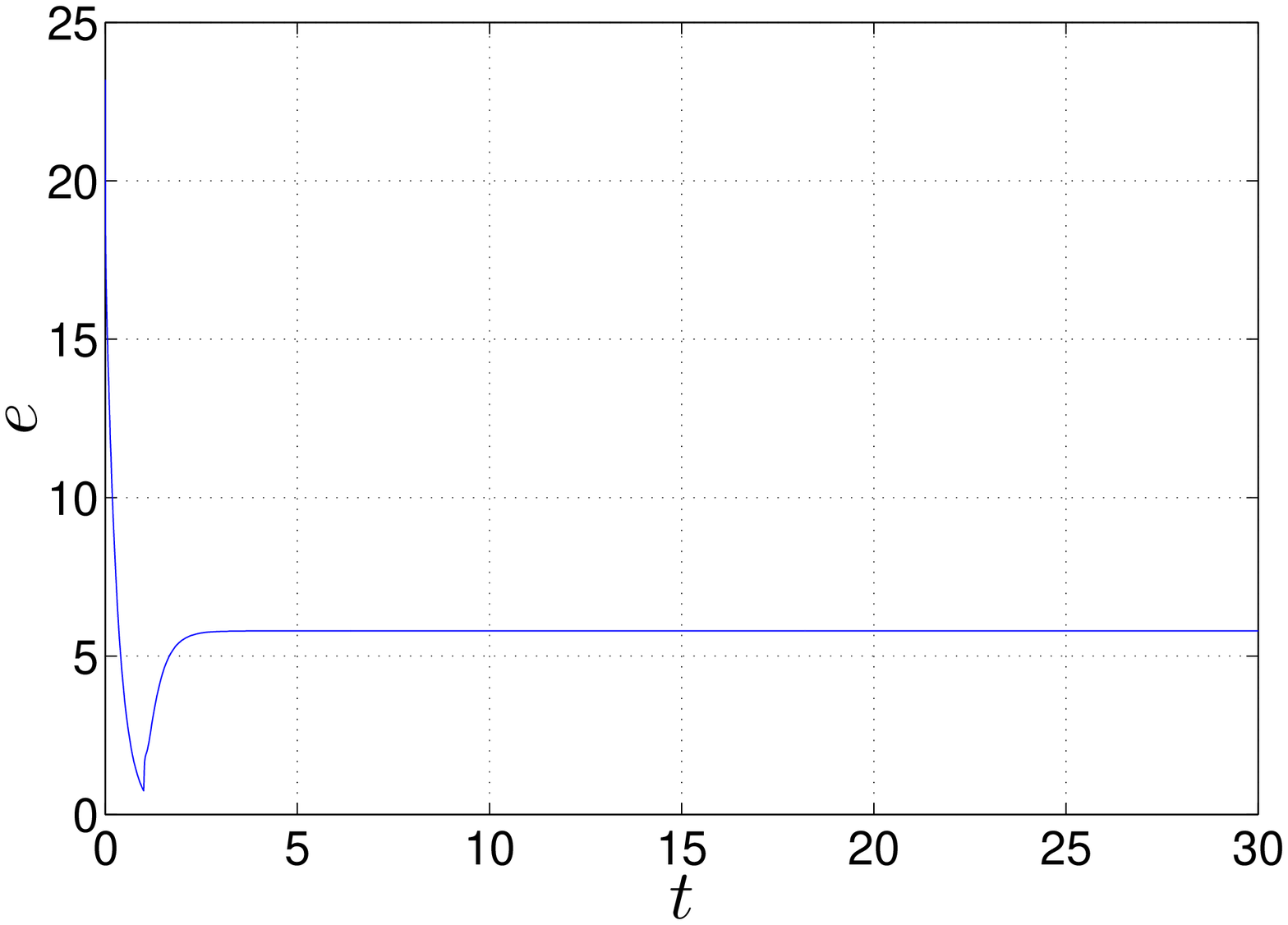}}
\caption{Time evolution of the network of linear oscillators with $PD$ controllers and heterogeneous  disturbances: (a) state components $x_1^{(i)}$; (b) global synchronization error $e$.}
\label{fig:linear_oscillator_disturbance_coupled_P}
\end{figure}

In order to reject the disturbance, we then consider a distributed $PID$ controller, coupling the network via a  $(\mathcal{G},3)${\em -collection} (notice that we have now $n=2$ and $h=1$).
As clearly emerges from Figure \ref{fig:linear_oscillator_disturbance_coupled_PI_x1_span80}, the integral control action is able to reject constant heterogeneous disturbances, thus leading the network to synchronization. 
{In Figures  \ref{fig:linear_oscillator_disturbance_coupled_PI_x1_span1_10}-\ref{fig:linear_oscillator_disturbance_coupled_PI_error_span70_80} the same evolution is given, zooming for a time span of ten seconds at the beginning and at the end of the simulation horizon, respectively. As can be witnessed, and differently from what happens in Figure \ref{fig:linear_oscillator_disturbance_coupled_P_x1}, all the nodes converge to the same oscillatory orbit.} 

Figure \ref{fig:linear_oscillator_disturbance_coupled_PI_error_span80} shows the asymptotic convergence to zero of the global synchronization error associated with such $PID$ scheme, {in comparison with the case in Figure \ref{fig:linear_oscillator_disturbance_coupled_P_error} where no integral action is considered.}

\begin{figure}[h!]
\centering
\subfigure[]{\label{fig:linear_oscillator_disturbance_coupled_PI_x1_span80}\includegraphics[width=0.4\textwidth]{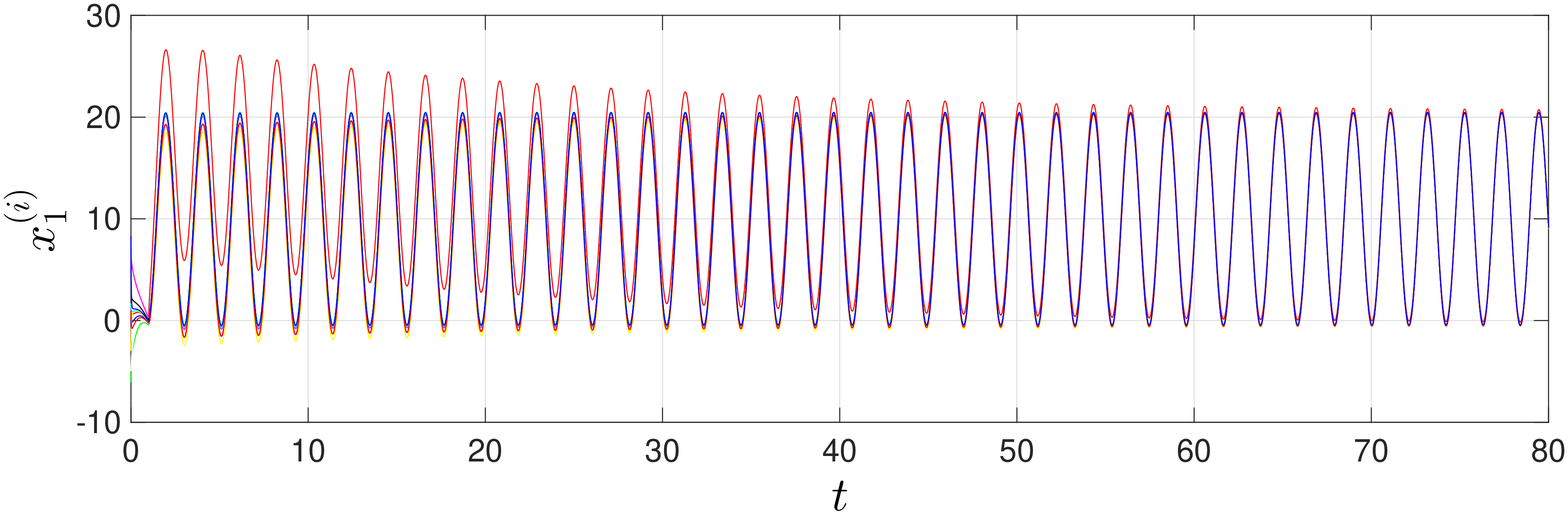}}
\subfigure[]{\label{fig:linear_oscillator_disturbance_coupled_PI_error_span80}\includegraphics[width=0.4\textwidth]{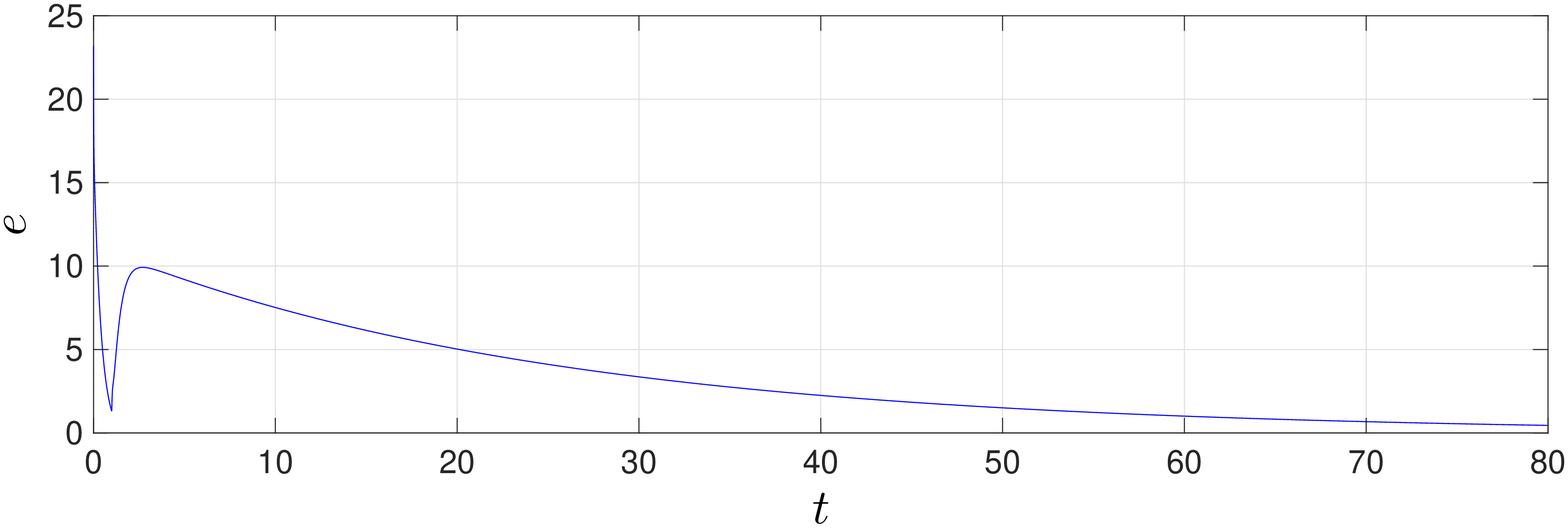}}
\caption{Time evolution of the network of linear oscillators with $PID$ controllers and heterogeneous  disturbances: (a) state components $x_1^{(i)}$; (b) global synchronization error $e$.}
\label{fig:linear_oscillator_disturbance_coupled_PI_span80}
\end{figure}

\begin{figure}[h!]
\centering
\subfigure[]{\label{fig:linear_oscillator_disturbance_coupled_PI_x1_span1_10}\includegraphics[width=0.4\textwidth]{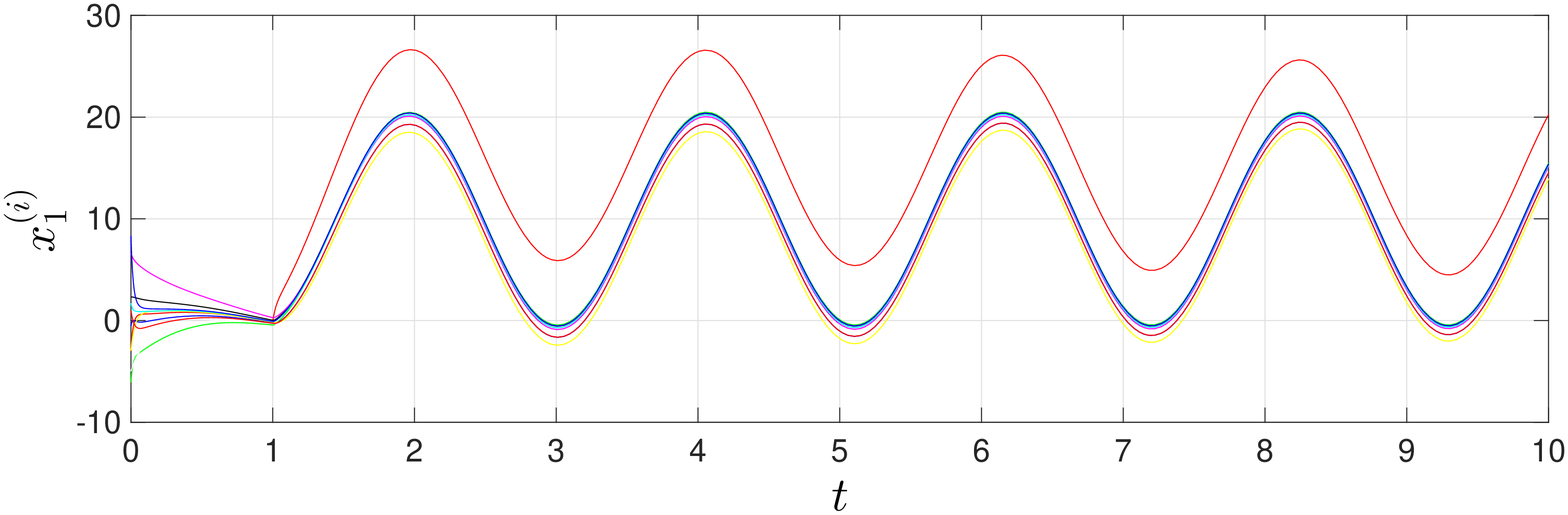}}
\subfigure[]{\label{fig:linear_oscillator_disturbance_coupled_PI_error_span70_80}\includegraphics[width=0.4\textwidth]{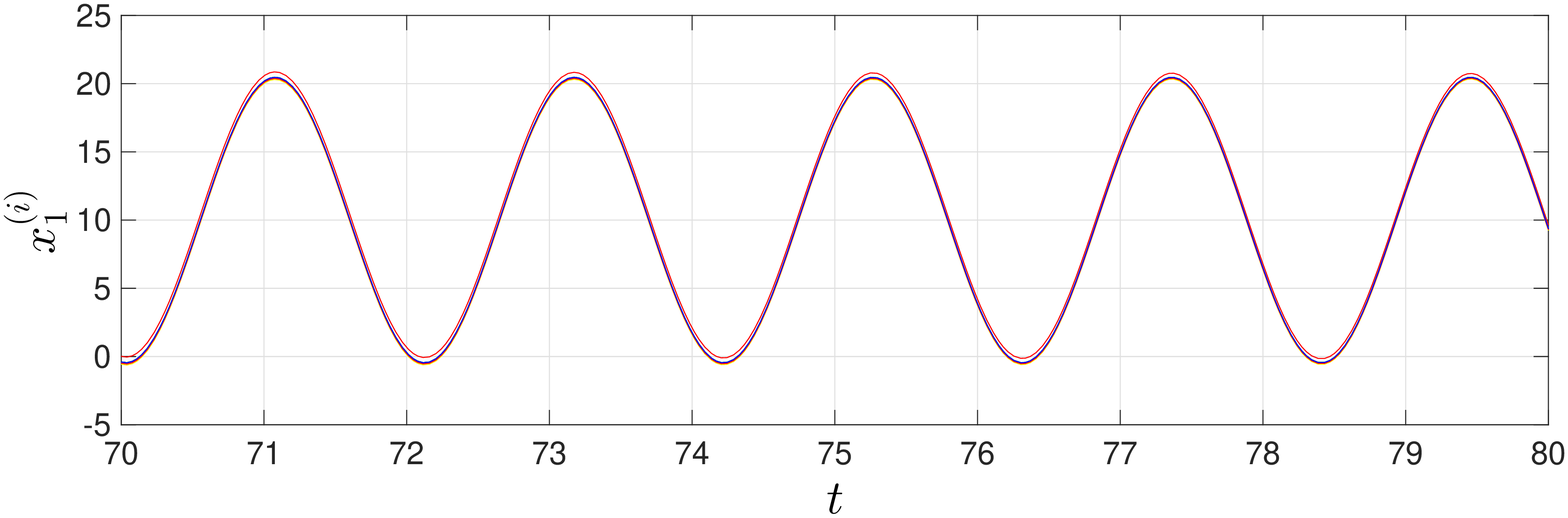}}
\caption{Time evolution of the network of linear oscillators with $PID$ controllers and heterogeneous  disturbances. Zoom of state components $x_1^{(i)}$: (a) beginning of the simulation horizon; (b) end of the simulation horizon.}
\label{fig:linear_oscillator_disturbance_coupled_PI_zoom}
\end{figure}

\section{Discussion and future work}\label{sec:conlusion_future_work}

In this paper we addressed the problem of higher-order free synchronization for nonlinear systems. 
Via an iterative procedure, we proved the existence of a class of feedback matrices, able to guarantee distributed state synchronization over any connected graph topology. The framework is related to any system order and easily embeds a possible distributed integral action of any order. 
The case of higher-order consensus is naturally embedded in our results as a particular case.
Furthermore, the methodology can also be extended to those linear and nonlinear systems admitting a (local) canonical transformation. In particular, for the specific case of linear systems, the synchronization with distributed $PI^hD^{n-1}$ controllers is guaranteed under the mild hypothesis of controllability of the agent's dynamics.

The presence of a distributed integral control action allows to attenuate possible distributed heterogeneous disturbances affecting the agents and, as shown in the numerical simulations, greatly improves the convergence performances. 

Future work will address in detail the analysis of robust synchronization of agents with parameters' mismatch and subjected to heterogeneous noises/disturbances as well as the case of directed/pinned network.

{A future direction of investigation is to recast the methodology adopted in this paper to the discrete time case. At the current stage, such extension is not trivial since the whole analysis (definitions of matrices $M_1$ and $H_1$, Algorithm \ref{alg:spectral_constranit_centralized_info} and Algorithm \ref{alg:spectral_constranit_decentralized_info}) is conducted for the continuous time case. Therefore, the discrete time case requires further studies.}


%
%
%
%
%
%
%
%
%
%
%
%
%
%
%
%
\bibliographystyle{IEEEtran}
\bibliography{IEEEabrv,bibliografiagenerale1_17}
\newpage
\appendix
\section*{Proof of Theorem \ref{thm:PI_controller_companion_form}}
Considering $x_k:=\left[x_k^{(1)},\dots,x_k^{(N)}\right]^T\in\R^N$, for all $k\in\{1,\dots,n\}$, as the stack of the $k$-th component of each agent,
we define the $(n+h)N$ stack system that embeds the integral control action considering the state position
 \begin{align*}
z_\vartheta(t) &= \int_{0}^{t,(h-\vartheta+1)}x_1 (\tau) d\tau, & \vartheta=1,\dots, h \\
z_\vartheta(t) &= x_{\vartheta-h}(t),  & \vartheta=h+1,\dots, h+n. 
\end{align*}
So, taking into account \eqref{eq:L_I_ridefine}-\eqref{eq:L_P_ridefine}, the following system system can be written as
\begin{align*}
&\dot{z}_1=z_2  \\
&\,\, \vdots   \\
&\dot{z}_h=z_{h+1}  \\
& \,\, \vdots  \\
&\dot{z}_{h+n}=F(t,z_h,\dots, z_{h+n})-l\sum_{\vartheta=1}^{n+h}L_{\vartheta}z_{\vartheta}, \\
\end{align*}
where we have used the stack form with $z:=\left[z_1^T,\dots,z_{n+h}^T\right]^T\in\R^{(n+h)N}$, and $F(t,z_h,\dots, z_{h+n}):=\left[f(t,z^{(1)}),\dots,f(t,z^{(N)})\right]^T$. 

The above system is again in companion form and Theorem \ref{thm:P_controller_companion_form} can be invoked to complete the proof.

\section*{Proof of Theorem \ref{thm:synch_controllable_nonlinear_systems}}
Conditions (i) and (ii) are necessary and sufficient for the existence of a diffeomorphism $T(\cdot)$ and for guaranteeing that $\mathcal{L}_g\mathcal{L}_f^{n-1}(x^{(i)})\neq 0$ \cite{slli:91}. Considering the state transformation $z^{(i)}=T(x^{(i)})$, system \eqref{eq:nonlinear_agent} can be written in the companion form
\begin{eqnarray}\label{eq:transformed_agent}
\dot{z}_1^{(i)}&=&z_2^{(i)} \nonumber \\
\dot{z}_2^{(i)}&=&z_3^{(i)} \nonumber \\
&\vdots & \nonumber \\
\dot{z}_n^{(i)}&=&\mathcal{L}_f^n\left(T^{-1}\left(z^{(i)}\right)\right)+\tilde{u}^{(i)}, \quad i=1,\dots, N, \nonumber\\
\end{eqnarray}
with $z^{(i)}=[z_1^{(i)}, \dots,z_N^{(i)}]^T$.
Now, since condition (iii) holds,  Theorem \ref{thm:PI_controller_companion_form} can be applied and $\tilde{u}_i(t)$ can be selected for the systems  \eqref{eq:transformed_agent} thus leading to $\lim_{t\rightarrow \infty}\|z^{(i)}(t)-z^{(j)}(t)\| =0$, for all $i,j=1,\dots,N$, or equivalently $\lim_{t\rightarrow \infty} z^{(i)}(t)=\lim_{t\rightarrow \infty} z^{(j)}(t)$. Now, since $T(\cdot)$ is a smooth invertible function, we have $\lim_{t\rightarrow \infty} T^{-1}\left(z^{(i)}(t)\right)=\lim_{t\rightarrow \infty} T^{-1}\left(z^{(j)}(t)\right)$ and so the convergence of the original states $x_i(t),x_j(t)$ is obtained.

\section*{Proof of Corollary \ref{thm:synch_controllable_linear_systems}}
Condition (i) of Theorem \ref{thm:synch_controllable_nonlinear_systems} is guaranteed by the controllability hypothesis, while condition (ii) is trivially satisfied by any set of constant vectors. So, the existence of a diffeomorphism is guaranteed and, furthermore, for controllable linear systems a linear state transformation $z^{(i)}=Tx^{(i)}$ can be explicitly given in closed form \cite{lu:79}. So, we can derive the equivalent linear system in companion form
\begin{eqnarray}
\dot{z}_1^{(i)}&=&z_2^{(i)} \nonumber \\
&\vdots & \nonumber \\
\dot{z}_n^{(i)}&=&a^Tz^{(i)}+u^{(i)}, \quad i=1,\dots, N, \nonumber\\
\end{eqnarray}
with $z^{(i)}=[z_1^{(i)}, \dots,z_n^{(i)}]^T$ and $a\in\R^n$ being a constant vector.
Now, since $a^Tz^{(i)}$ is a linear function, it is also Lipschitz and, by Lemma \ref{lem:weak_Lipschitz}, weak-Lipschitz. For this reason, also condition (iii) is  satisfied and the corollary is proved by invoking Theorem \ref{thm:synch_controllable_nonlinear_systems}. Furthermore, the coupling gains can be selected analogously to what done in Theorem \ref{thm:PI_controller_companion_form}.

\vfill
%
%
\end{document}